\newtheorem{theorem}{\bf Theorem}[section]
\newcommand{\rmi}{\mathrm{i}}
\newcommand{\sign}{\operatorname{sign}}
\begin{document}

%%%% Article title to be placed here
\title{Knotted fields and explicit fibrations for lemniscate knots}

\author{%%%% Author details
B.~Bode\footnote{benjamin.bode@bristol.ac.uk}, M.~R.~Dennis\footnote{mark.dennis@bristol.ac.uk}, D.~Foster and R.~P.~King}
%\date{}

%%%%%%%%% Insert author address here
\address{H H Wills Physics Laboratory, University of Bristol, Bristol BS8 1TL, UK
}

%%%% Subject entries to be placed here %%%%
%\subject{Mathematical Physics, Topology}

%%%% Keyword entries to be placed here %%%%
\keywords{Knot, singularity, braid, applied topology}
%%% good choices?

%%%% Insert corresponding author and its email address}
%\corres{Benjamin Bode, Mark Dennis\\
%\email{benjamin.bode@bristol.ac.uk}, \email{mark.dennis@bristol.ac.uk}}
%BB: wrote Benjamin instead of Ben
%%% mrd: final final version, from now on all edits require comment

%%%% Abstract text to be placed here %%%%%%%%%%%%
\begin{abstract}
We give an explicit construction of complex maps whose nodal line have the form of lemniscate knots. We review the properties of lemniscate knots, defined as closures of braids where all strands follow the same transverse (1, $\ell$) Lissajous figure, and are therefore a subfamily of spiral knots generalising the torus knots. 
We then prove that such maps exist and are in fact fibrations with appropriate choices of parameters. We describe how this may be useful in physics for creating knotted fields, in quantum mechanics, optics and generalising to rational maps with application to the Skyrme-Faddeev model. We also prove how this construction extends to maps with weakly isolated singularities. 
\end{abstract}
%%%%%%%%%%%%%%%%%%%%%%%%%%%
\maketitle
%%%%%%%%%% Insert the texts which can accomdate on firstpage in the tag "fmtext" %%%%%

\section{Introduction}\label{sec:intro}
%%%% Insert A head here

A challenging problem in knot theory is writing down an explicit parametrisation of a curve in the form of a given type of knot or link.
This problem becomes even more demanding when we ask for an explicit function of three-dimensional space which contains special loci which are knotted and linked, such as a complex-valued function $\mathbb{R}^{3}\longrightarrow\mathbb{C}$ with a zero level set of the form of a given knot, a model \emph{knotted field}. 

The study and construction of knotted fields of this type is of interest in both mathematics and physics. 
In particular, various kinds of knotted fields find application in areas such as topological fluid dynamics \cite{moffatt:1969degree,lr:2012jones}, nonlinear field theories \cite{sutcliffe:2007knots}, nematic liquid crystals \cite{ma:2014knotted}, excitable media \cite{winfree:1994persistent}, electromagnetic fields \cite{ib:2008linked} and optical physics \cite{bd:2001knotted,dkjop:2010isolated}. 
The physicist's interest is then usually in the specific conformation of the knot when the function minimises some energy functional or a solution of some linear or nonlinear PDE.

Here, we show how to generate explicit, complex scalar knotted fields for the family of knots we call \emph{lemniscate knots}, which, in a generalisation of the procedure described in \cite{dkjop:2010isolated}, are built up from explicit constructions of certain braids, which are defined by trigonometric functions.
Lemniscate knots have not, to our knowledge, been emphasised as a class within knot theory---they are a subclass of so-called \emph{spiral knots} \cite{betvwy:2010spiral}---and are automatically fibred (i.e.~the complement of the knot can be divided into topologically equivalent surfaces parametrised by points on a circle); we prove that the lemniscate knotted fields arising from the construction can be explicit fibrations.
Theorems related to the Nash-Tognoli Theorem \cite{bcr:1998real} imply that every knot or link type can be realised as the intersection of the zero sets of two real polynomials in the spatial variables $x$, $y$ and $z$.
However, such theorems are rarely constructive, leaving a physicist or applied mathematician, wishing for an explicit analytic function representation for a knotted field, at a loss on how to proceed.

A construction by Brauner in 1928 \cite{brauner:1928geometrie} gives an explicit realisation of such maps for the $(p,q)$-torus knot or link. 
This begins by constructing a complex polynomial of two complex variables $u$ and $v$, 
\begin{equation}
   f_{\mathrm{Brauner}}:\mathbb{C}^{2}\longrightarrow\mathbb{C}, \qquad f_{\mathrm{Brauner}}(u,v)=u^{p} - v^{q}.
   \label{eq:brauner}
\end{equation}
With the restriction $|u^2|+|v^2| = 1$, $(u,v)$ can be used as complex coordinates for the unit three-sphere $S^3$.
Stereographically projecting to $\mathbb{R}^3$, for example with the explicit choice of projection 
\begin{equation}
   u = \frac{r^2 -1 + 2 \rmi z}{r^2 + 1}, \qquad v = \frac{2(x+\rmi y)}{r^2 + 1},
   \label{eq:coords}
\end{equation}
where $\boldsymbol{r} = (x,y,z)$, makes $f_{\mathrm{Brauner}}$ in (\ref{eq:brauner}) an explicit complex rational function of $x$, $y$ and $z$, which indeed has a nodal line in the form of the $(p,q)$ torus knot (if $p$ and $q$ are coprime, otherwise it is a torus link). 
If necessary only the numerator (a polynomial in $x,y,z$) of the rational function $f_{\mathrm{Brauner}}$ can be considered; in both cases the zero level set contains the desired knot.

Brauner's method can be understood in terms of closing braids to obtain knots or links.  
The $(p,q)$ torus knot or link is the closure of the simple braid where with $p$ strands forming a helix, undergoing $q/p$ full twists, as the example in Figure \ref{fig:torusknot}. 
This braid maps to a 1-parameter family of complex polynomials, parametrised by real $h$ acting as braid height, and the $s$ roots sweep out the $s$ strands of the braid as $h$ increases.
This leads directly to the function $u^p - v^q$, where the level set of zero forms the $(p,q)$ torus link. 
This can be seen by mapping the complex braid into the complex $u,v$ coordinates of the 3-sphere, which closes the braid.

\begin{figure}
\begin{center}
\includegraphics[height=4.5cm]{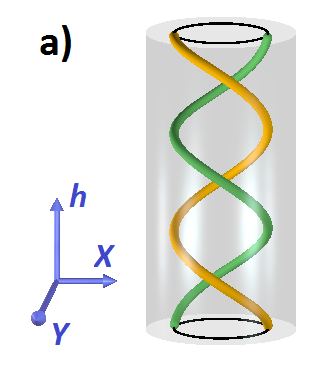}\hspace{2cm}
\includegraphics[height=4.5cm]{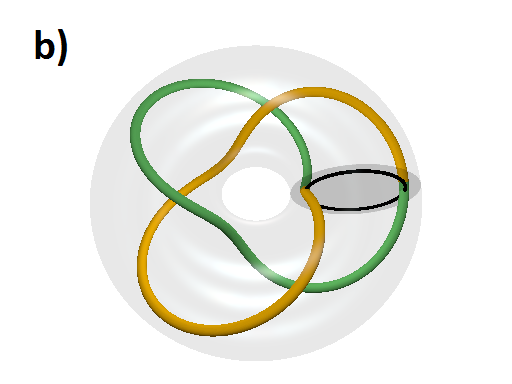}
\caption{
   Closing a helical braid on a cylinder to a torus knot. 
   (a) The strands of the 2-strand braid $\sigma^{3}$ form a three-crossing double helix inside a cylinder. $X$, $Y$ and $h$ increase in the direction of the corresponding arrow.
   (b) Gluing the top and bottom of the cylinder together closes the braid, yielding a trefoil knot inside a torus.
   }
\label{fig:torusknot}
\end{center}
\end{figure}

This approach led Perron to construct a polynomial map $f:\mathbb{R}^{4}\longrightarrow\mathbb{R}^{2}$ for the figure-8 knot $4_1$, the simplest non-torus knot \cite{perron:1982noeud}, by a similar construction involving the pigtail braid shown in Figure \ref{fig:figure8}.
Although $4_1$ cannot be represented by a braid that can be drawn on a cylinder, it can be arranged to lie on the surface of the cartesian product of a lemniscate with an interval.
This leads naturally to a trigonometric parametrisation of the braid as the lemniscate of Gerono (a $(1,2)$-Lissajous figure), and the procedure then essentially follows Brauner's construction.
The braid in Figure \ref{fig:figure8} consists of three strands with different starting points following the same lemniscate path along the braid (as height $h$ increases).
Two vertical periods of the characteristic alternating crossing pattern of the pigtail braid are shown.
Perron's construction generalises to all knots and links which can be formed as closures of the braid which consists of $r$ copies of this same basic braid ($r = 2$ is $4_1$, $r = 3$ gives the borromean rings $L6a4$, $r=4$ the knot $8_{18}$, etc).
Similar functions based on this lemniscate were constructed by Rudolph \cite{rudolph:1987isolated} and later Dennis et al.~\cite{dkjop:2010isolated}, of the form $f:\mathbb{C}\times\mathbb{R}^{2}\longrightarrow\mathbb{C}$, which are holomorphic in one variable (i.e.~complex analytic in $u$ but not $v$), which we call \emph{semiholomorphic}.

\begin{figure}
\begin{center}
\begin{subfigure}[t]{0.4\textwidth}
\centering
\includegraphics[height=4.5cm]{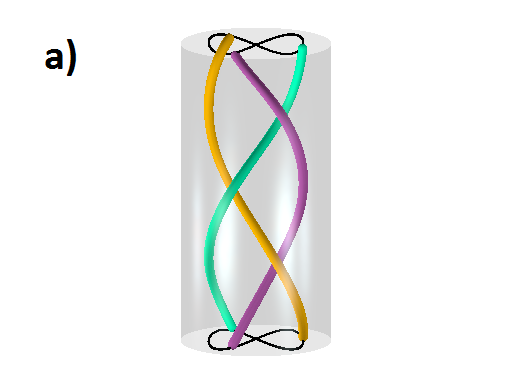}
\end{subfigure}
\begin{subfigure}[t]{0.4\textwidth}
\centering
\includegraphics[height=4.5cm]{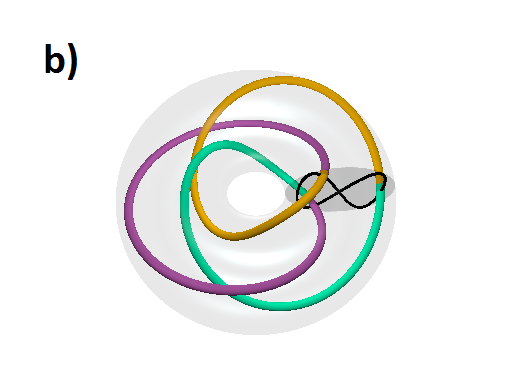}
\end{subfigure}
\caption{The lemniscate braid with braid word $(\sigma_{1}^{-1}\sigma_{2})^{2}$, closing to the figure-8 knot. 
(a) The strands of the braid $(\sigma_1^{-1}\sigma_2)^{2}$ drawn in a cylinder. 
The strands are parametrised such that they lie on the `$\infty$' figure times $[0,2\pi]$ (the pigtail braid).
(b) Gluing the the ends of the cylinder yields a torus enclosing the figure-8 knot.}
\label{fig:figure8}
\end{center}
\end{figure}

Here, we generalise the construction further, explicitly constructing semiholomorphic maps with nodal sets in the form of families of knots based on Lissajous figures.
We call these families \emph{lemniscate knots}, which are based on braids whose strands follow the same generalised lemniscate $(1,\ell)$ Lissajous figure (a true lemniscate has $\ell = 2$), and are trigonometric functions of braid height parameter $h$.
The resulting semiholomorphic complex function $f(u,v,\overline{v})$, with lemniscate knotted nodal set, gives a complex scalar field $F(\boldsymbol{r})$ of 3-dimensional space by (\ref{eq:coords}), whose nodal lines (phase singularities) have the form of the lemniscate knot.
Such functions $F$ can then be used for various physical applications, such as holograms to create knotted optical vortices \cite{dkjop:2010isolated}, as templates for vector fields whose helicity is determined by the knot \cite{kfdi:2016weaving}, or candidate, knotted minimum-energy solutions for the Skyrme-Faddeev model \cite{sutcliffe:2007knots}.

The maps constructed by Brauner, Perron and Rudolph in fact satisfy much stronger properties than just having the correct topology on the unit three-sphere, defining neighbourhoods of singularities in 4-dimensional real space, as studied extensively by Milnor \cite{milnor:1968singular}. 
Although we have been unable to extend the mappings based on lemniscate knots to singularities of semiholomorphic type (as Rudolph), nevertheless the symmetries of the lemniscate braids often do allow the explicit construction of polynomial maps with weakly isolated singularities of the type originally described by Akbulut and King \cite{ak:1981all}.

As a basis for our later construction, it is helpful to formalise the procedure for torus knots and the figure-8 knot discussed above, generalising to arbitrary lemniscate braids.
In the horizontal plane transverse to the braid height, the $s$ strands follow the generalised lemniscate curve ($(1,\ell)$ Lissajous figure), parametrised by $h$, given by $(X^{s,r}_j(h),Y^{s,r,\ell}_j(h))$, where $j = 1,\dots,s$, and
\begin{equation}
   X^{s,r}_j(h) = a \cos\left(\frac{1}{s} [r h + 2\pi (j-1)]\right), \qquad 
   Y^{s,r,\ell}_j(h) = \frac{b}{\ell}\sin\left(\frac{\ell}{s} [r h + 2\pi (j-1)]\right).
   \label{eq:XYsrl}
\end{equation}
Here, $a$, $b\in\mathbb{R}$ are stretching factors, set to unity unless otherwise stated.
The prefactor $\ell^{-1}$ in $Y^{s,r,\ell}_j(h)$ ensures that each `lobe' in the Lissajous figure has aspect ratio approximately unity when $a = b$.
$X$ is simply a cosine function, and is independent of $\ell$.
The $j$th point (representing a strand of the braid) moves cyclically to the $j+1$th point ($j = 1, 2,\ldots, s$), and this pattern (equivalent algebraically to a basic braid word) repeats $r$ times as $0 \le h \le 2\pi$.  
In the 3-dimensional space of the braid, the strands follow the curve $\boldsymbol{S}^{s,r,\ell}_j(h)$ parametrised by the height coordinate $h$, with $0 \le h \le 2\pi$, and $h$ increasing upwards,
\begin{equation}
   \boldsymbol{S}^{s,r,\ell}_j(h) = \left(X^{s,r}_j(h), Y^{s,r,\ell}_j(h), h\right).
   \label{eq:strand}
\end{equation}
This braid is represented by the family of complex polynomials $p^{s,r,\ell}_h(u)$, with variable $u \in \mathbb{C}$ and real, cyclic parameter $h\in [0,2\pi]$, that have roots $Z^{s,r,\ell}_j(h)$ given by the intersection of the parametrised braid with the horizontal plane (now taken to be the complex plane) at height $h$, i.e.~$Z^{s,r,\ell}_j(h)= X^{s,r,\ell}_j(h) + \mathrm{i} Y^{s,r,\ell}_j(h)$, so
\begin{equation}
   p^{s,r,\ell}_h(u) = \prod_{j = 1}^s (u - Z^{s,r,\ell}_j(h)).
   \label{eq:polydef}
\end{equation}
The semiholomorphic map $f(u,v,\overline{v})$ with knotted zero line is found by the replacement, in $p^{s,r,\ell}$, of $\exp(\mathrm{i}h)$ with $v$ and $\exp(-\mathrm{i}h)$ with $\overline{v}$, ensuring $v\overline{v} = 1$.
This is proved later in Section \ref{sec:fibration}.

When $\ell = 1$, the braid is helical, closing to a $(s,r)$ torus knot or link.
Assuming $a=b$, each strand follows a circle of radius $a$ in the horizontal plane, and the $s$ strands are uniformly distributed around this circle. 
After a $2\pi$ increase of $h$, the regular $s$-gon of intersections of strands with the horizontal plane has turned by $2\pi r / s$. 
Since each root $Z^{s,r,1}_j$ has the form $a \exp( \mathrm{i} h r/s)$ times an $s$th root of unity, the polynomial (\ref{eq:polydef}) multiplies out to the form $p^{s,r,1}_h(u) = u^s - a \exp(\mathrm{i} h r)$.
In this case, the map $f$ arises from $p_h(u)$ on identifying $\exp(\mathrm{i} h) \to v$, giving, for the $(s,r)$ torus knot, $f = u^s - v^r$, equivalent to $f_{\mathrm{Brauner}}$ above.
The explicit knotted field of $\mathbb{R}^3$ arises from the substitution (\ref{eq:coords}).

The procedure of creating the figure-8 knot uses $\ell = 2$ (following the approach of \cite{dkjop:2010isolated}), and replaces the circular trajectory in the horizontal plane of the braid with the lemniscate $(\cos t,\tfrac{1}{2}\sin2t)$ (assuming $a = b =1$); the braid whose closure is the figure-8 knot has three strands and two repeats of the basic period, so the figure-8 knot has $(s,r,\ell) = (3,2,2)$ in (\ref{eq:XYsrl}), and the corresponding polynomial (\ref{eq:polydef}) has roots $Z^{3,2,2}_j(h) = \cos\left(\tfrac{1}{3}[2h+2\pi(j-1)]\right)+\tfrac{1}{2}\mathrm{i} \sin\left(\tfrac{2}{3}[2h+2\pi(j-1)]\right)$, and, after multiplying out the polynomial and making the identification $\exp(\mathrm{i} h) \to v$, $\exp(-\mathrm{i} h) \to \overline{v}$, we get the function \cite{dkjop:2010isolated}
\begin{equation}
   f_{\mathrm{fig}\mbox{-}8}(u,v,\overline{v}) = 64 u^3 - 12 u (3 + 2 [v^2 - \overline{v}^2] ) - 14(v^2 + \overline{v}^2) - (v^4 - \overline{v}^4).
   \label{eq:ffig8}
\end{equation}
Unlike the function $u^s - v^r$ for torus knots, this function is semiholomorphic, depending on both $v$ and $\overline{v}$, a consequence of the fact that the trigonometric functions in $Z^{3,2,2}_j(h)$ have different arguments; underlying this is the fact that only braids where all crossings are over-crossings (as in Figure \ref{fig:torusknot} but not Figure \ref{fig:figure8}) can be represented by fully holomorphic polynomials.
For closures of the braid with different choices of $r$, $v^2$ and $\overline{v}^2$ are replaced in (\ref{eq:ffig8}) with $v^r, \overline{v}^r$, giving, the borromean rings $r = 3$, etc.~as discussed above.
Equation (\ref{eq:ffig8}) gives an explicit figure-8 knotted field in $\mathbb{R}^3$ with the identification (\ref{eq:coords}).

The structure of this paper proceeds as follows.
In the next section, we consider the mathematical properties of lemniscate knots.
In Section \ref{sec:fibration}, we prove that the map $f$ described above, constructed from the polynomial (\ref{eq:polydef}), has the desired knot and link, for appropriate choices of $a$ and $b$. 
Furthermore, with appropriate $a$ and $b$, we prove the argument of the resulting function $\arg(f)$ gives a fibration of the knot complement over $S^{1}$.
Physical applications of the procedure involving knotted fields in quantum mechanics, optics and Skyrme-Faddeev hopfions are discussed in Section \ref{sec:applications}.
The construction of polynomial maps with weakly isolated singularities of the form of Akbulut and King is discussed in Section \ref{sec:AK}, before a concluding discussion in Section \ref{sec:discussion}.

\section{Lemniscate knots: braids and properties}\label{sec:braids}

Lemniscate knots and links are defined as the closures of braids whose strands execute the same generalised lemniscate trajectory in the horizontal plane, as given in (\ref{eq:XYsrl}), (\ref{eq:strand}).
They are determined by three positive integers: the number of strands $s$, the number of repeats $r$ of the basic pattern, and the number of lobes $\ell$ in the generalised lemniscate, i.e.~in the $(1,\ell)$ Lissajous figure.
We will often refer to the functions $X(h), Y(h)$, suppressing suffixes when the context is clear.
Different choices of $a$ and $b$ give the same braid which is rescaled in horizontal plane.
We always assume that $a>0$.
Replacing $b$ by $-b$ gives the braid which is the mirror image and hence the closures of the braids corresponding to $b$ and $-b$ are also mirror images. 
Unless stated otherwise, we assume $b > 0$. 
Otherwise $a$ and $b$ are just scale factors, and do not change the topology.

Equation (\ref{eq:XYsrl}) not only parametrises the braid, but also its closure. 
The lemniscate knot which is the closure of the braid in Equation \ref{eq:XYsrl} can be parametrised by 
\begin{equation}
	\left(\cos(h)(R+X_{1}^{s,r}(h)),\sin(h)(R+X_{1}^{s,r}(h)),Y_{1}^{s,r,\ell}(h)\right),\qquad
	h\in[0,2s\pi]
	\label{eq:para}
\end{equation}
where $R>0$ is large enough that $(R+X_{1}^{s,r}(h)) > 0$ for all $h\in[0,2s\pi]$ (and upwards increase of $h$ corresponds to right-handed increase of azimuthal coordinate in the solid torus). 
The idea behind this parametrisation can be understood as taking the parametrised braid inside a cylinder of radius $R$ and wrapping it around, joining top and bottom of the cylinder.
This is illustrated in Figures \ref{fig:torusknot} and \ref{fig:figure8}, and also Figure \ref{fig:ell3}, which involves an $\ell = 3$ generalised lemniscate, with five strands and two repeats, closing to the knot $7_7$ (as discussed later at Table \ref{tab:ell=3}).
Note that in this process we have identified the braid height coordinate $h$ with an azimuthal coordinate of the solid torus in which the knot is embedded. 
We will revisit this idea in the construction of knotted fields in later sections.

\begin{figure}
\begin{center}
\begin{subfigure}[t]{0.4\textwidth}
\centering
\includegraphics[height=4.5cm]{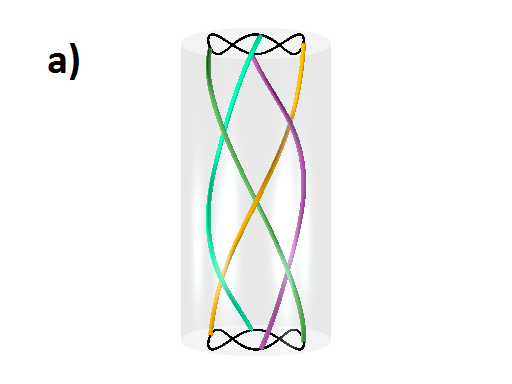}
\end{subfigure}
\begin{subfigure}[t]{0.4\textwidth}
\centering
\includegraphics[height=4.5cm]{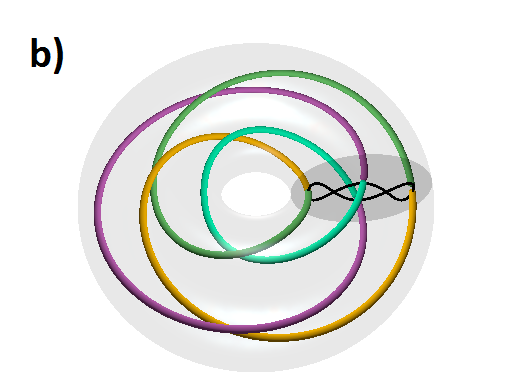}
\end{subfigure}
\caption{A braid with an $\ell=3$ horizontal lemniscate trajectory, and its closure. 
(a) The trigonometric braid with parameters $(s,r,\ell) = (4,2,3)$.
(b) Closure of the braid, isotopic to the link $L6a1$.}
\label{fig:ell3}
\end{center}
\end{figure}

From the point of view of constructing fields with specified zero lines (as for parametrisations of knotted curves) from braids via (\ref{eq:para}), it is most natural to specify the braid's strands as parametric curves; however, the usual mathematical framework of braids is via the algebra of crossings of the Artin braid group (described in \cite{kt:2008braid}), which we briefly review before examining the braids closing to lemniscate knots.
The group's generators are the crossings $\sigma_k$, $k = 1, \ldots, s-1$, with $k$ labelling the crossing position in order from the left, with positive power for an overcrossing, and the inverse $\sigma_k^{-1}$ for an undercrossing.
A product of generators $\sigma_{k_{1}}^{\varepsilon_{1}}\sigma_{k_{2}}^{\varepsilon_{2}}\ldots\sigma_{k_{n}}^{\varepsilon_n}$ is called a \emph{braid word}, and represents the geometric braid which is formed of strands that perform the crossings specified in the braid word from left to right following the braid downwards.
Isotopic braids are equivalent under the braid group relations: generators $\sigma_k$, $\sigma_{k'}$ commute unless $|k-k'| = 1$, in which case $\sigma_k \sigma_{k+1} \sigma_k = \sigma_{k+1} \sigma_k \sigma_{k+1}$. 
This algebraically represents the third Reidemeister move; the fact that $\sigma_k \sigma_k^{-1}$ is the identity represents the second Reidemeister move.

For knots and links formed by the closure of the braid represented by a word $w$, there are two additional \emph{Markov moves}: if the generator $\sigma_1$ with power $\pm 1$ (equivalently, $\sigma_{s-1}^{\pm 1}$) occurs exactly once in $w$, then the knot is isotopic to the braid closure of the word with $\sigma_1$ ($\sigma_{s-1}^{\pm 1}$) omitted (this stabilisation move is equivalent to the first Reidemeister move); the conjugation move states that the knot which is the closure of $w = w' \sigma_k$ is isotopic to the closure of $\sigma_k w'$.
A braid defines a permutation on the strand labels; the number of disjoint cycles of the permutation gives the number of disjoint components of the link upon closing the braid.

The convention in braid theory (e.g.~\cite{kt:2008braid}) is to read the braid word from a diagram \emph{downwards}, with a left-handed orientation (i.e.~overcrossings of strands from the left are considered positive).
The figures of the parametric curves \ref{fig:torusknot}, \ref{fig:figure8}, \ref{fig:ell3} are drawn in a left-handed coordinate system $(X,Y,h)$, where $h$ increases upwards, so that the figures show braid diagrams that allow to read off the braid word in the usual way.
We will attempt to respect both conventions of reading braid words downwards and describing the geometric movements of the strands in the direction of increasing $h$, upwards.
This convention means that under braid closure, $X,Y,h$ map, in cylindrical coordinates, to radius, height and azimuth respectively as in (\ref{eq:para}).

The diagram of the braid comprised of the $s$ strands following $(X_j(h),Y_j(h),h)$, $j = 1, \ldots,s$, is defined to be the projection of the braid in the $(0,1,0)$-direction, i.e.~the braid diagram is made up of the curves $(X_j(h),h)$, and crossing signs are determined by $Y_j(h)$. 
Crossings occur when $(X_{j}(h'),h')=(X_{j'}(h'),h')$ at some $h=h'$ and some $j,j'$ with $j\neq j'$.
Our convention is that the strand labelled by $j$ crosses over $j'$ at $h$ when $Y_{j}(h) > Y_{j'}(h)$.
From the form of $X^{s,r}_j(h)$ in (\ref{eq:XYsrl}), this requirement is satisfied if and only if $h r = 0\mod 2\pi$ or $h r = \pi\mod 2\pi$. 
We choose to place the crossings at $h=2\pi$ at the beginning of the braid word (identical, by Markov conjugation, to considering them at $h=0$ at the end of the word). 
Note that the crossings at $h r=0\mod 2\pi$, $\sigma_{k}^{\varepsilon_{k}}$ with $k$ odd, are simultaneous in $h$, as are the crossings at $h r=\pi\mod 2\pi$ with $k$ even.

For $r = 1$, this gives the basic braid word, corresponding to the parametrised strands (\ref{eq:XYsrl}), 
\begin{equation}
   w^{(s,r=1,\ell)} = \sigma_1^{\varepsilon_1}\sigma_3^{\varepsilon_3}\sigma_5^{\varepsilon_5}\cdots\sigma_2^{\varepsilon_2}\sigma_4^{\varepsilon_4}\sigma_6^{\varepsilon_6}\cdots,
   \label{eq:bbw}
\end{equation}
and for general $r$, we have $r$ repeats, i.e.~$w^{(s,r,\ell)} = (w^{(s,1,\ell)})^r$.
The signs of the crossings $\varepsilon_k$ are determined by $\ell$; the crossing sign depends on the parity of the lobe of the Lissajous figure in which the crossing occurs.
Since the strands cannot intersect, $s$ and $\ell$ must be relatively coprime (so there is no $h$ for which $(X_j(h),Y_j(h),h)=(X_{j'}(h),Y_{j'}(h),h)$ for some $j\neq j'$). 
We choose $s > \ell$, since for any knot with $s <\ell$, there is $\ell'<s$ which gives rise to the same braid word.
It is straightforward to see that the crossing signs, for $j = 1, \ldots, s-1$, and $s$ and $\ell$ coprime, are determined by the following rule:
\begin{equation}
	\varepsilon_{s-1}=\sign(b)\qquad \hbox{and }\varepsilon_{j}=\begin{cases} -\varepsilon_{j+1} \hbox{ if there is an integer $m$ with }\frac{j}{s}<\frac{m}{\ell}<\frac{j+1}{s},
	\\ +\varepsilon_{j+1} \hbox{ otherwise.}\end{cases}
   \label{eq:epscondition}
\end{equation}
For $\ell = 1$ and $b>0$, this implies that the crossings are all positive, as expected for braid representations of torus knots.
For $\ell = 2$, $s$ must be odd, and so all crossings $j < s/2$ are positive, and $j > s/2$ are negative if $b>0$.
Thus the braid representation of the figure-8 knot is $(\sigma_1^{-1} \sigma_2)^2$. 
Equation (\ref{eq:epscondition}) describes the signs of lemniscate braid with positive $b$. 
Note that for negative values of $b$ the vector $\boldsymbol{\varepsilon}=(\varepsilon_{1},\varepsilon_{2},\ldots,\varepsilon_{s-1})$ is exactly the negative of the $\boldsymbol{\varepsilon}$ for positive $b$.
In general, the lemniscate braid representation is not the minimal braid representation of the knot or link as found by \cite{gittings:2004minimum}.

The lemniscate knots thus described are in the more general family of \emph{spiral knots} (or links); a spiral knot (link) is defined as the closure of the $r$th power of a braid word $w$ in which every generator $\sigma_k$ appears exactly once, either as positive or negative power $\varepsilon_k = \pm 1$ \cite{betvwy:2010spiral}.
We term such braid words \emph{isograms}.
The braid word (\ref{eq:bbw}) satisfies this condition and hence all lemniscate knots are spiral, but in general spiral knots are not subject to (\ref{eq:epscondition}).
Following \cite{betvwy:2010spiral}, the spiral knot with $s$ strands, $r$ repeats with signs determined by $\boldsymbol{\varepsilon}$ is denoted $S(s,r,\boldsymbol{\varepsilon})$, and the lemniscate knot with $s$ strands, $r$ repeats with $\ell$ lobes by $L(s,r,\ell)$; therefore $L(s,r,\ell) \simeq S(s,r,\boldsymbol{\varepsilon})$ for $\boldsymbol{\varepsilon}$ satisfying (\ref{eq:epscondition}).
Spiral knots have several remarkable properties, which do not depend on $\boldsymbol{\varepsilon}$, summarised in the following.

\begin{theorem} 
\label{spiral}The spiral knot/link $S = S(s,r,\boldsymbol{\varepsilon})$ satisfies the following properties. 
\begin{enumerate}[label=(\roman*)]
\item If $r = 1$, $S$ has one component iff it is the unknot; \label{num:th1} 
\item $S$ is an $m$-component link iff $\operatorname{gcd}(s,r) = m$, and in particular $S$ is a knot when $s$ and $r$ are relatively coprime; \label{num:th2}
\item If $r = 2$, then $S$ is a 2-bridge knot (i.e.~rational); \label{num:2-bridge}
\item Every spiral knot is a periodic knot \cite{livingston:1993knot} with period $r$; \label{num:periodic}
\item Every spiral knot is fibred \cite{stallings:1978constructions}; \label{num:fibred} 
\item If $r = 1$, then the word of $S$ can be rearranged (i.e.~all anagrams of the same isogram close to the same knot or link) and for arbitrary $r$, $S$ is the $r$th power of any such rearrangement; \label{num:th3}
\item If $r$ is a prime power, $r = p^k$, and $S$ a 1-component link, then the Alexander polynomial $\Delta_S(t) \equiv (1 + t + \ldots +t^{s-1})^{r-1} \mod p$; \label{num:th4} 
\item If $S$ is a knot, the genus $g$ of $S$ satisfies $\tfrac{1}{2}\operatorname{deg}\Delta_S \le g \le \tfrac{1}{2}(s-1)(r-1)$; \label{num:th5}
\item If $r$ is a prime power and $S$ a knot, $\operatorname{deg}\Delta_{S} = (s-1)(r-1)$ and the genus $g$ of $S$ satisfies $g = \tfrac{1}{2}(s-1)(r-1)$; \label{num:th6}
\item If $r$ is a prime power, the minimal crossing number $c$ of $S$ satisfies $(s-1)(r-1) < c \le (s-1)r$. \label{num:cor1}
\end{enumerate}
\end{theorem}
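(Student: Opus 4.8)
The plan is to derive almost every part from a single combinatorial lemma together with standard knot-theoretic machinery, with \ref{num:2-bridge} the one statement for which I would lean on the spiral-knot literature. The lemma is that the permutation induced by an \emph{isogram} --- a braid word in which each of $\sigma_{1},\dots,\sigma_{s-1}$ occurs exactly once --- is always an $s$-cycle, because the underlying transpositions $(k,k+1)$ are the edges of the path on $s$ vertices and a product of the edge-transpositions of any tree on $n$ vertices, each taken once in any order, is an $n$-cycle. Granting this, the permutation of $w^{r}$ is the $r$th power of an $s$-cycle, which has exactly $\gcd(s,r)$ cycles; since the number of link components equals the number of cycles of the braid's permutation, this is \ref{num:th2}. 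For \ref{num:th1}: when $r=1$ the letter $\sigma_{s-1}^{\pm1}$ occurs once, so a Markov destabilisation deletes it and leaves an isogram on $s-1$ strands; iterating reduces the closure to the one-strand braid, i.e.\ the unknot, so an $r=1$ spiral link is always at once connected and trivial. For \ref{num:periodic}: writing $w^{r}$ as $r$ identical blocks stacked in the braiding direction and closing in the complement of the braid axis $A$, the order-$r$ rotation of $S^{3}$ about the unknot $A$ cyclically permutes the blocks and hence fixes $S$; since $A$ is disjoint from $S$, the knot $S$ is periodic of period $r$ in the sense of \cite{livingston:1993knot}, with quotient the closure of a single block --- the unknot, by \ref{num:th1}.

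Next I would dispose of the fibring and rearrangement statements. For \ref{num:fibred}: in $w^{r}$ each $\sigma_{k}$ occurs $r\ge1$ times with the single sign $\varepsilon_{k}$, so $w^{r}$ is a \emph{homogeneous} braid and Stallings' theorem \cite{stallings:1978constructions} makes its closure fibred, the fibre being the surface $F$ obtained by Seifert's algorithm on the closed-braid diagram; that diagram has $s$ Seifert circles and $(s-1)r$ crossings, so $g(F)=\tfrac12(s-1)(r-1)$, a value I will reuse below. For \ref{num:th3}: the $r=1$ case follows by running the destabilisation of \ref{num:th1} from an arbitrary anagram (conjugate its unique $\sigma_{s-1}^{\pm1}$ to the end of the word, destabilise, and induct on $s$, noting that the resulting $(s-1)$-strand isogram carries the same signs for every anagram); for general $r$ one checks, as in \cite{betvwy:2010spiral}, that any two anagrams of an isogram are conjugate in $B_{s}$ --- they are joined by far-commutations $\sigma_{k}\sigma_{k'}=\sigma_{k'}\sigma_{k}$ ($|k-k'|\ge2$) and by cyclic permutations --- so their $r$th powers are conjugate and share a closure. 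Part \ref{num:2-bridge} is the one item not handed to us by generic tools: following \cite{betvwy:2010spiral}, one writes $S(s,2,\boldsymbol{\varepsilon})=\widehat{\beta^{2}}$ with $\beta=\sigma_{1}^{\varepsilon_{1}}\cdots\sigma_{s-1}^{\varepsilon_{s-1}}$ and reduces $\beta^{2}$, by Markov moves and planar isotopy, to the closure of a rational tangle in Conway normal form.

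The Alexander-polynomial, genus and crossing-number items I would treat together. Applying Murasugi's congruence for a knot of prime-power period $q=p^{k}$ ($k\ge1$) --- with quotient the unknot (trivial Alexander polynomial, by \ref{num:th1}) and $S$ linking $A$ exactly $s$ times --- gives $\Delta_{S}(t)\equiv(1+t+\cdots+t^{s-1})^{r-1}\pmod p$, which is \ref{num:th4} (a quick check on the figure-8 and trefoil confirms the formula). The right-hand side has degree $(s-1)(r-1)$ with leading coefficient $1\not\equiv0\pmod p$, so $\deg\Delta_{S}\ge(s-1)(r-1)$; together with the general bounds $\deg\Delta_{S}\le2g$ (Seifert matrix) and $g\le\tfrac12(s-1)(r-1)$ (the surface $F$) this is \ref{num:th5}, and for $r$ a prime power it pinches to $\deg\Delta_{S}=(s-1)(r-1)$ and $g=\tfrac12(s-1)(r-1)$, which is \ref{num:th6}. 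For \ref{num:cor1}: the diagram $\widehat{w^{r}}$ has $(s-1)r$ crossings, so $c\le(s-1)r$; and for any diagram $D$ of $S$, Seifert's algorithm gives $c(D)\ge2g+O(D)-1$, while Yamada's theorem identifies $\min_{D}O(D)$ with the braid index, which is at least $2$ since $S$ is non-trivial (its Alexander polynomial is non-constant), so $c\ge2g+1=(s-1)(r-1)+1$.

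The step I expect to be the real obstacle is \ref{num:2-bridge}: all the remaining parts collapse onto the isogram/$s$-cycle lemma plus off-the-shelf results (Markov moves, Stallings' fibration theorem, Murasugi's congruence, Seifert's algorithm, Yamada's theorem), whereas recognising $\widehat{\beta^{2}}$ as a rational link requires an explicit and somewhat delicate braid simplification, which I would quote from \cite{betvwy:2010spiral} rather than reprove. A lesser point is making the inequality in \ref{num:cor1} \emph{strict}; I would secure it precisely as above, via the braid index (equivalently, via the fact that a diagram of a non-trivial knot has at least two Seifert circles).
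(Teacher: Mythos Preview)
Your treatment of parts \ref{num:th1}, \ref{num:th2}, and \ref{num:periodic}--\ref{num:cor1} is correct and in fact more explicit than the paper, which for those items simply cites \cite{betvwy:2010spiral} (flagging Murasugi's congruence for \ref{num:th4}) and invokes Stallings' theorem for \ref{num:fibred}, exactly as you do. The substantive divergence is part \ref{num:2-bridge}. You single it out as the one requiring a delicate braid simplification and plan to quote it from \cite{betvwy:2010spiral}; but the paper states explicitly that \ref{num:2-bridge} is one of the two items \emph{not} in \cite{betvwy:2010spiral}, so that citation would come up empty. Instead the paper supplies its own short geometric argument: realise the spiral braid parametrically with horizontal coordinate $X_j(h)=\cos\bigl((rh+2\pi(j-1))/s\bigr)$ (and $Y_j$ chosen to produce the prescribed crossing signs $\boldsymbol{\varepsilon}$), then close it in cylindrical coordinates with $h$ as azimuth, $2+Y_j(h)$ as radius, and $X_j(h)$ as \emph{height}. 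On the closed curve the height function is a cosine with exactly $r$ maxima and $r$ minima, so the bridge number is at most $r$; for $r=2$ this gives a $2$-bridge presentation directly, with no braid algebra at all. Your diagrammatic route---reducing $\widehat{\beta^{2}}$ by Markov moves to a rational tangle---is not wrong, and the paper does carry out exactly that reduction later, in the proof of Theorem~\ref{conway}; but it is appreciably longer than the height-function trick, and you would have to supply it yourself rather than lift it from the spiral-knot reference.
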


The proofs of all of these but parts \ref{num:2-bridge} and \ref{num:fibred} are given in \cite{betvwy:2010spiral} (or are straightforward generalisations; part \ref{num:th4} is based on Murasugi's theorem \cite{murasugi:1971periodic}).
Part  \ref{num:th3} is what allows a spiral knot to depend only on $s$, $r$ and $\boldsymbol{\varepsilon}$, and not on the specific ordering of the basic word, which justifies the notation $S(s,r,\boldsymbol{\varepsilon})$.
Part \ref{num:2-bridge} follows from considering a braid diagram as a braid as in (\ref{eq:strand}) as a parametric curve in cylindrical coordinates with $a=b=1$, with angle $h$, radius $2+Y_j(h)$ and height $X_j(h)$; with $X_j$ as the height function, there are $r$ maxima and $r$ minima, so $S$ is 2-bridge if $r = 2$ (equivalent to (\ref{eq:para}) with $X$ and $Y$ exchanged).
More generally, this representation shows the $r$-fold periodicity as a cyclic symmetry generated by a $2\pi/r$ rotation about the axis of cylindrical coordinates. 
Part \ref{num:fibred} follows from Stalling's theorem \cite{stallings:1978constructions} that a knot is fibred if it has a homogeneous braid representation (i.e.~the knot/link can be represented by a word where each generator only appears with the same sign within the word); this follows directly since the braid words of spiral knots are powers of isograms.

Strictly speaking there are two lemniscate knots $L(s,r,\ell)$ for every choice of $s$, $r$ and $\ell$, one for a positive value of $b$ and one for a negative $b$. 
The two knots are mirror images, so that some invariants like the Alexander polynomial, the crossing number, the braid index and the genus do not distinguish them. 
In particular, the statements of Theorem \ref{spiral} are valid for both cases.

Lemniscate knots have the additional symmetry that the $\varepsilon$ of (\ref{eq:epscondition}) is a palindromic vector if $\ell$ is odd, and anti-palindromic if $\ell$ is even, that is $\varepsilon_{j} = (-1)^{\ell+1}\varepsilon_{s-j}$. 
As we show below, this seems to give rise to symmetric tangle representations of rational lemniscate knots, and similarly palindromic minimal braid words (where known), although we do not have a general proof which covers all values of $s$, $\ell$ and $r$.
In common with other studies of spiral knots \cite{kst:2016sequences}, families of lemniscate knots and links seem to have common properties regarding their Alexander polynomial coefficients (for knots), Jones polynomial coefficients and tangle notation (when $r=2$), implying they are worthy of study in general, not simply as the knots simply realizable as nodes of complex scalar functions.  
We arrange our observations by $\ell$, principally considering $r = 2$ (i.e.~the rational knots) and increasing $s$; knots are recognised from standard tabulations \cite{knotatlas,knotinfo} using polynomial invariants. 
The limitations of these tables (going no higher than minimal crossing number $c = 14$ for knots, and $c = 10$ for links) mean that very few lemniscate knots with $r\ge 3$ and $s > \ell+1$ can be identified (although invariants can be calculated for others).

When $\ell = 1$, we have the torus knots, with $L(s,r,1)$ being the $(s,r)$ torus knot (which is isotopic to the $(r,s)$ torus knot).
Since all crossings in the braid words for these have the same sign, the braid words generating the knots are not only homogeneous, but strictly positive.
The properties of torus knots are well-known \cite{kawauchi:1996survey}, and we do not consider them further here.

The next case are the `figure-8 family' of lemniscate knots with $\ell = 2$. 
Since $s$ and $\ell$ must be coprime, the braids have an odd number of strands (starting at $s = 3$), and must be knots as $s$ and $r = 2$ are coprime. 
We have the following Theorem:

\begin{theorem}
\label{rob} The period 2, figure-8 lemniscate knot $L = L(s=2n+1,r=2,\ell=2)$ has minimal braid word $\sigma_1^{-1}\sigma_2^{n}\sigma_1^{-n}\sigma_2$, and has Alexander polynomial given by 
\[\Delta_L(t) = t^{-n} - 3 t^{-n+1} + 5 t^{-n+2} \ldots + (-1)^n s + \ldots + t^n = \sum_{k = -n}^n (-1)^{n+k} (2(n-|k|)+1)t^k.\]
\end{theorem}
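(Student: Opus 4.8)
The plan is to establish the two assertions of the theorem separately: first the minimal braid word, then the Alexander polynomial, obtaining the latter from the former. I would begin with the braid word. By \eqref{eq:bbw} and \eqref{eq:epscondition} the basic lemniscate word for $(s,r,\ell) = (2n+1,1,2)$ is the isogram $w^{(s,1,2)} = \sigma_1\sigma_3\cdots\sigma_{s-2}\,\sigma_2\sigma_4\cdots\sigma_{s-1}$ with the sign pattern dictated by \eqref{eq:epscondition}: for $\ell = 2$ every generator $\sigma_j$ with $j < s/2 = n + \tfrac12$ is positive and every $\sigma_j$ with $j > s/2$ is negative (taking $b<0$, or the mirror otherwise, to match the stated word which has $\sigma_1$ negative). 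The lemniscate knot $L$ is the closure of $(w^{(s,1,2)})^2$. By Theorem~\ref{spiral}\ref{num:th3} (rearrangement of isograms), I may reorder the generators within one basic period freely; the strategy is then to use this freedom, together with the braid relations and Markov moves (conjugation and stabilisation), to reduce $(w^{(s,1,2)})^2$ to the claimed four-letter-block word $\sigma_1^{-1}\sigma_2^{\,n}\sigma_1^{-n}\sigma_2$ on $3$ strands. Concretely, I would cyclically reorder so each period reads $\sigma_1^{\pm}\sigma_2^{\pm}\cdots\sigma_{s-1}^{\pm}$ in index order, then repeatedly apply far-commutation and the relation $\sigma_k\sigma_{k+1}\sigma_k = \sigma_{k+1}\sigma_k\sigma_{k+1}$ to ``slide'' the extreme generators $\sigma_{s-1}^{-1}$ inward so that $\sigma_{s-1}$ appears exactly once, allowing a destabilisation $s \mapsto s-1$; iterating this destabilisation down to $3$ strands should collapse the structure to $\sigma_1^{-1}\sigma_2^{n}\sigma_1^{-n}\sigma_2$. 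Minimality of this word is then argued from the crossing number bound Theorem~\ref{spiral}\ref{num:cor1} with $r = 2$ a prime power: $c \le (s-1)r = 2s-2$ but also $c > (s-1)(r-1) = s - 1$, and the exhibited word has $2 + n + n = 2n + 2 = s + 1$ crossings on $3$ strands, which combined with a braid-index argument (the knot is not on $2$ strands once $n \ge 2$) and the crossing-number inequality pins down minimality in the relevant sense.

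For the Alexander polynomial I would compute the reduced Burau representation of the four-block $3$-strand braid $\beta = \sigma_1^{-1}\sigma_2^{\,n}\sigma_1^{-n}\sigma_2$. On $3$ strands the reduced Burau matrices of $\sigma_1$ and $\sigma_2$ are the standard $2\times 2$ matrices over $\mathbb{Z}[t^{\pm1}]$; the powers $\sigma_1^{-n}$ and $\sigma_2^{\,n}$ have entries that are closed-form geometric sums in $t$ (or can be handled by diagonalising, since each generator's Burau matrix has eigenvalues $1$ and $-t$), so the product is an explicit $2\times2$ matrix. Then $\Delta_L(t) \doteq \det(\mathrm{Burau}(\beta) - I)\big/(1 + t + t^2)^{?}$ up to units — more precisely I would use the standard formula relating the Alexander polynomial of a braid closure to $\det(\bar\rho(\beta) - I_{s-1})$ divided by $1 + t + \cdots + t^{s-1}$ (here $s = 3$, closure of a knot), then normalise to a symmetric Laurent polynomial. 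The claimed answer $\sum_{k=-n}^n (-1)^{n+k}(2(n-|k|)+1)t^k$ is symmetric under $t \leftrightarrow t^{-1}$ and has the correct value $\pm s = \pm(2n+1)$ at $t = 1$ as forced by the identity $\Delta_L(1) = \pm 1$ — wait, for a knot $\Delta_L(1) = \pm 1$, so I would instead check the leading/trailing coefficients are $\pm1$ (consistent with $r = 2$ being prime, via Theorem~\ref{spiral}\ref{num:th6}: $\deg\Delta_L = (s-1)(r-1) = 2n$, matching the span from $t^{-n}$ to $t^n$) and verify the recursion on $n$ directly. An efficient route is induction on $n$: passing from $L(2n+1,2,2)$ to $L(2n+3,2,2)$ corresponds to $\sigma_2^{\,n}\sigma_1^{-n} \mapsto \sigma_2^{\,n+1}\sigma_1^{-n-1}$, a controlled change in the Burau product, and I would show this multiplies the characteristic data so as to send $\Delta_{n}$ to $\Delta_{n+1}$ under the stated coefficient pattern (the tridiagonal ``$1,-3,5,\ldots$'' pattern is exactly what one gets from iterating a fixed $2\times2$ transfer matrix).

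The main obstacle I anticipate is the braid-word reduction: turning $(w^{(s,1,2)})^2$ into the compact form $\sigma_1^{-1}\sigma_2^{\,n}\sigma_1^{-n}\sigma_2$ requires a genuinely careful, sign-aware sequence of braid relations and destabilisations, and it is easy to lose track of which crossings are positive versus negative while sliding generators past one another (the palindromic/anti-palindromic structure of $\boldsymbol\varepsilon$ noted before the theorem should be the organising principle, but making it rigorous for all $n$ is the delicate part). Establishing that the resulting word is genuinely \emph{minimal} (not merely short) is a secondary difficulty, since the crossing-number bounds from Theorem~\ref{spiral} leave a small gap that must be closed by an independent lower bound — e.g.\ from the breadth of the Jones polynomial or from the fact that $\Delta_L$ has $2n+1$ nonzero coefficients, which forces at least $2n$ crossings and hence (with the braid index $\ge 3$ for $n\ge 2$) the claimed minimum. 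The Burau computation itself is routine once the word is in hand; its only subtlety is bookkeeping the overall unit $\pm t^{m}$ to land on the symmetric normalisation written in the statement.
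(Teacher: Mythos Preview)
Your plan is essentially the paper's own approach: reduce $(w^{(2n+1,1,2)})^2$ to a $3$-strand word by iterated commutation, braid relations, and Markov destabilisation, then compute $\Delta_L$ from the Burau representation of that short word using closed-form geometric sums for the powers $\sigma_1^{\pm n},\sigma_2^{\pm n}$ (the paper uses the unreduced $3\times3$ Burau matrices and takes a $2\times2$ minor, but this is equivalent to your reduced-Burau formulation). The paper's direct calculation yields $\Delta_L(t)=1-A_n-B_n+2A_nB_n$ with $A_n=\sum_{j=0}^n(-t)^j$, $B_n=\sum_{j=0}^n(-t^{-1})^j$, which is precisely the closed form your ``geometric sums'' remark is pointing at; your proposed induction on $n$ would also work but is not needed.

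One point worth flagging: you devote effort to \emph{proving} minimality of the braid word via crossing-number and braid-index bounds. The paper does not actually do this in general---it simply observes that the reduced word agrees with Gittings' tabulated minimal braids for the small-$n$ cases in Table~\ref{tab:ell=2}. So your proposed argument for minimality (Jones breadth, $\deg\Delta_L=2n$, braid index $\ge 3$) would in fact be a strengthening of what the paper establishes, not a gap in your plan.
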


The proof is given in the Appendix.
Combining the results of Theorem \ref{spiral} and \cite{king:2010thesis}, we see that these knots are rational, with minimal crossing number $c = s+1$ (whereas the original generating braid has $2s-2$ crossings), braid index $b_{\mathrm{ind}}=2$ (i.e.~number of strands of minimal braid word), $\operatorname{deg}\Delta_{L} = n$ and genus $n$.
Properties of the first few members of this family are given in Table \ref{tab:ell=2}.

\begin{table}[h!]
   \centering
   \begin{tabular}{|l|llllll|}
   \hline
   $s$ & 3 & 5 & 7 & 9 & 11 & 13\\
   \hline
   knot type & 4$_1$ & 6$_3$ & 8$_9$ & 10$_{17}$ & K12a$_{1273}$ & K14a$_{19298}$ \\
   tangle notation & $[22]$ & $[2112]$ & $[3113]$ & $[4114]$ & $[5115]$ & $[6116]$ \\ 
   Jones coeffs & 1,1,1 & 1,2,2,3 & 1,2,3,4,5 & 1,2,3,5,6,7 & 1,2,3,5,7,8,9 & 1,2,3,5,7,9,10,11 \\ 
\hline
\end{tabular}
   \caption{$L(s = 2n+1, r=2,\ell=2)$ lemniscate knots for $n = 1,2,\ldots,6$, identified by knot type, tangle notation and Jones polynomial coefficients.
   For these knots, the Jones polynomial is found to have the same symmetry properties as the Alexander polynomial (i.e.~signs of coefficients alternating, $V(t^{-1}) = V(t)$) and the degree is $n+1$; only the absolute values of coefficients are given, for powers $t^{-n-1}, \ldots, 1$.}
   \label{tab:ell=2}
\end{table}

The tangle notation for $s \ge 5$, always has the symmetric form $[n\ 1\ 1\ n]$.
The symmetries of the braid word for even $\ell$ imply that these knots are achiral, and hence their Jones polynomials $V(t)$ have a similar form to the Alexander polynomial (i.e.~$V(t) = V(t^{-1})$ with alternating signs of coefficients), with the coefficient of the constant term always positive. 
Since 2-bridge knots are alternating, the span of the Jones polynomial is equal to the crossing number.
Furthermore, as $s$ increases, the sequence of coefficients appears to settle to the odd integers including 2, i.e. $1,2,3,5,7,\ldots$ (although we have no general proof).

We previously listed the figure-8 family of knots and links with $s = 3$: again, these are $4_1$ ($r = 2$), $L6a_4$ ($r = 3$), $8_{18}$ ($r = 4$), $10_{123}$ ($r=5$), $\ldots$.
This suggests that for these knots $c = 2r$; some properties of this sequence, such as the values of the determinants $\Delta(-1)$, have been described in \cite{betvwy:2010spiral,kst:2016sequences}.
The only other $\ell = 2$ lemniscate knot (i.e.~$s>3$ and $r>2$) appearing in tabulations is $L(5,3,2)$, which is isotopic to $K12n_{706}$.

For lemniscate knots and links with $\ell = 3$, the $r = 2$ cases for the lowest numbers of strands (coprime to 3) are given in Table \ref{tab:ell=3}.
Both knots and links appear in the list, so Alexander polynomials are not considered. 
For odd $\ell$ the lemniscate knots $L(s,r,\ell)$ are in general not achiral, so the closures of the braids with $b \gtrless 0$ are not mutually isotopic. 
In Table $\ref{tab:ell=3}$ we consider positive values of $b$, as it seems to give rise to a tangle notation where all entries are positive. 
Constructing a similar table for negative values of $b$ from $\ref{tab:ell=3}$ is trivial.
As in the $r = 2, \ell = 2$ case considered above, the tangle representations are all symmetric; for $s \ge 7$ they follow the same pattern $[n11(n-2+m)11n]$ where $s = 3n+m$ (and $m = 1,2$). 
The pattern of crossing numbers in the table suggests that $c=s+2$.
The absolute values of the coefficients of the Jones polynomials form a triangular arrangement of integers, with a maximum at the constant coefficient; as $s$ increases, pattern seems to settle down to sequence $1,4,9,17,29 \ldots$ (as negative powers decrease to $0$), which for $n > 2$ is given by the formula $n^2 + (n - 3)^2$. 
For decreasing positive powers, the coefficients form the sequence $1,3,6,12,21 \ldots$ which for $n > 4$ is given by $n^2 + (n - 3) (n - 7)$.
Minimum braid words have been found for these cases up to $s = 7$, being $\sigma_1\sigma_2^{-1}\sigma_1\sigma_3\sigma_2^{-1}\sigma_3$ for $s = 4$, $\sigma_1\sigma_2^{-1}\sigma_1\sigma_2^{-1}\sigma_3\sigma_2^{-1}\sigma_3$ for $s=5$ and $\sigma_1^{2}\sigma_2^{-1}\sigma_1\sigma_2^{-1}\sigma_3\sigma_2^{-1}\sigma_3^{2}$ for $s=7$.
The braid index in all these cases is equal to $\ell+1=4$.
Increasing the period $r$ with $s = 4$ gives the sequence of knots and links beginning $L6a_1$ ($r=2$), $9_{40}$ ($r=3$), then an untabulated 12-crossing link.
These appear to have $c = 3r$, and the sequence of determinants of any minor of the crossing matrix is found in \cite{kst:2016sequences}.

\begin{table}[h!] 
   \centering
   \begin{tabular}{|l|llllll|}
   \hline
   $s$ & 4 & 5 & 7 & 8 & 10 & 11 \\ 
   \hline
   knot/link & $L6a_1$ & $7_7$ & $9_{31}$ & $L10a_{91}$ & $L12_{??}$ & $13a_{4296}$ \\
   Jones  & 1,2,2,2, & 1,2,3,4, & 1,4,6,8,10, & 1,4,7,10,13, & 1,4,9,15,22,28,30, & 1,4,9,17,26,36,43,45, \\
   $\;$ coeffs & $\;$ 3,1,1 & $\;$ 4,3,3,1 & $\;$ 9,8,5,3,1 & $\;$ 13,9,3,1 & $\;$ 29,25,18,12,6,3,1 & $\;$ 44,37,29,20,12,6,3,1 \\ 
\hline
\end{tabular}
   \caption{$L(s, r=2,\ell=3)$ lemniscate knots for $s$ coprime to $3$, identified by knot type, genus, tangle notation (they are all rational) and absolute values of Jones polynomial coefficients. 
   We cannot identify the link at $s = 10$ as available tabulations do not extend far enough.
   }
   \label{tab:ell=3}
\end{table}

Increasing $\ell$ further gives families which have similar features discussed in \cite{king:2010thesis}. 
These include, for $r = 2$, similar patterns in tangle notation and Jones polynomial coefficients as discussed here for $\ell = 2$ and $3$.

The patterns in crossing numbers, braid index and Conway tangle notation indicated by Tables 2 and 3 generalise to the following result, which holds in general for spiral knots and links.

\begin{theorem}
\label{conway} Let $L$ be a spiral link with $r=2$. 
Then it is rational and if we write the vector \[\boldsymbol{\varepsilon}=(\varepsilon_{1,1},\varepsilon_{1,2},\ldots,\varepsilon_{1,n_{1}},\varepsilon_{2,1},\ldots,\varepsilon_{2,n_{2}},\ldots,\varepsilon_{\ell,n_{\ell}})\] with $\varepsilon_{i,j}=\varepsilon_{i,k}$ for all $j$, $k\in\{1,\ldots,n_{i}\}$ and $\varepsilon_{i,n_{i}}=-\varepsilon_{i+1,1}$, then the Conway tangle notation of $L$ is 
\[
[\varepsilon_{1,1} n_{1},\ \varepsilon_{1,1},\ \varepsilon_{1,1},\ \varepsilon_{1,1}(n_{2}-1),\ \varepsilon_{1,1},\ \varepsilon_{1,1},\ \varepsilon_{1,1}(n_{3}-1),\ldots\ \varepsilon_{1,1}(n_{\ell-1}-1),\ \varepsilon_{1,1},\ \varepsilon_{1,1},\ \varepsilon_{1,1} n_{\ell}].
\]
We also have that the minimal crossing number is $c(L)=s+\ell-1$ and if $L$ is a knot, then the braid index $b_{\mathrm{ind}}$ is $b_{\mathrm{ind}}(L)=\ell+1$.
\end{theorem}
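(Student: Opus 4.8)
The plan is to establish the three assertions---the Conway tangle notation, the minimal crossing number, and the braid index---in that order, using that $L$ is already known to be rational by Theorem~\ref{spiral}\ref{num:2-bridge}, so that only its fraction remains to be pinned down. By Theorem~\ref{spiral}\ref{num:th3} I may take the isogram $w$ in the standard order $\sigma_1^{\varepsilon_1}\sigma_2^{\varepsilon_2}\cdots\sigma_{s-1}^{\varepsilon_{s-1}}$, so that $L=\widehat{w^{2}}$, and record the maximal constant-sign blocks of $\boldsymbol\varepsilon$, of lengths $n_1,\dots,n_\ell$ with $\sum_i n_i=s-1$ and $\varepsilon_{i,n_i}=-\varepsilon_{i+1,1}$; throughout I take $\ell\ge 2$, the case $\ell=1$ being the $(2,s)$ torus link for which the statements are classical.

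For the Conway notation I would start from the closed-braid diagram of $\widehat{w^{2}}$ and put it, by Reidemeister moves and flypes, into the rational-tangle normal form realising the two-bridge structure of Theorem~\ref{spiral}\ref{num:2-bridge} (a diagram carrying a Morse function with two maxima and two minima, i.e.\ the numerator closure of a rational tangle $T$), then read the continued fraction of $T$ off its twist regions. The key point is that these twist regions are in bijection with the constant-sign blocks of $\boldsymbol\varepsilon$: a block of width $n_i$ becomes one twist region, and the sign reversal between consecutive blocks forces a clasp joining consecutive twist regions; together with the doubling coming from $r=2$, the clasps are exactly what insert the pairs $\varepsilon_{1,1},\varepsilon_{1,1}$ and replace $n_i$ by $n_i-1$ on the interior blocks, giving $[\varepsilon_{1,1}n_{1},\varepsilon_{1,1},\varepsilon_{1,1},\varepsilon_{1,1}(n_{2}-1),\dots,\varepsilon_{1,1},\varepsilon_{1,1},\varepsilon_{1,1}n_{\ell}]$. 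I expect pinning down this continued fraction to be the main obstacle: the bookkeeping at the block boundaries---checking that each clasp contributes precisely this pattern, that an interior block with $n_i=1$ produces a genuine $0$ entry which then merges its two neighbouring $\varepsilon_{1,1}$'s, and that all sign and orientation conventions agree with Tables~\ref{tab:ell=2} and~\ref{tab:ell=3}---is delicate. I would organise it as an induction on $\ell$, peeling off the first block together with its clasp to exhibit $T$ as a tangle sum with the rational tangle of a spiral link having one block fewer, the base case $\ell=2$ (the figure-$8$ family, tangle $[\varepsilon_{1,1}n_1,\varepsilon_{1,1},\varepsilon_{1,1},\varepsilon_{1,1}n_2]$) being checked directly.

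The crossing number then follows formally. Every entry of the displayed continued fraction is $\varepsilon_{1,1}$ times a non-negative integer, so after collapsing each $0$ entry (which replaces a string $p,0,q$ by $p+q$ of the same sign, leaving the sum of absolute values unchanged) one obtains a continued fraction all of whose entries have the common sign $\varepsilon_{1,1}$ and absolute value at least $1$; its standard diagram is reduced and alternating, so by the Tait conjectures (Kauffman--Murasugi--Thistlethwaite) the minimal crossing number of $L$ equals its number of crossings, which is $n_1+n_\ell+\sum_{i=2}^{\ell-1}(n_i-1)+2(\ell-1)=\sum_{i=1}^{\ell}n_i+\ell=(s-1)+\ell=s+\ell-1$.

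Finally, for the braid index when $L$ is a knot, I would read an explicit braid on $\ell+1$ strands whose closure is $L$ off the rational tangle of the first step, generalising the minimal words $\sigma_1^{-1}\sigma_2^{n}\sigma_1^{-n}\sigma_2$ ($\ell=2$) and $\sigma_1^{2}\sigma_2^{-1}\sigma_1\sigma_2^{-1}\sigma_3\sigma_2^{-1}\sigma_3^{2}$ ($\ell=3$) found in the small cases; this gives $b_{\mathrm{ind}}(L)\le\ell+1$. For the matching lower bound I would invoke Murasugi's determination of the braid index of alternating links---equivalently, the sharpness of the Morton--Franks--Williams inequality for $2$-bridge links---applied to the reduced alternating diagram of the previous step: counting its Seifert circles and discarding the reducible ones yields $\ell+1$, so $b_{\mathrm{ind}}(L)=\ell+1$. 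The restriction to knots enters only at this last step, since for some orientations in the link case that bound ceases to be sharp; the tangle-notation and crossing-number statements hold for every spiral link with $r=2$. Throughout, consistency with Tables~\ref{tab:ell=2} and~\ref{tab:ell=3}, with the known minimal braid words, and with the determinant $\det L$ (the numerator of the continued fraction) provides useful checks.
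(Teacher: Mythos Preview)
Your outline is broadly correct and lands on the same endgame as the paper---a reduced alternating rational diagram, then the Tait conjectures for $c(L)$ and Murasugi's braid-index result for $b_{\mathrm{ind}}(L)$---but the paper reaches the Conway notation by a different and more concrete route. Rather than an induction on $\ell$ that peels off one block at a time, the paper performs a single global diagrammatic move: it routes the closure arcs of $\widehat{w^2}$ through the braid so that the entire top row of crossings cancels (and the bottom crossings cancel wherever $\varepsilon_i=\varepsilon_{i+1}$), reading off directly an \emph{intermediate} 4-plait with Conway notation $[\varepsilon_{1,1}(n_1{+}1),\,2\varepsilon_{2,1},\,\varepsilon_{2,1}n_2,\,2\varepsilon_{3,1},\ldots,\,2\varepsilon_{\ell,1},\,\varepsilon_{\ell,1}n_\ell]$. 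The passage from this intermediate form to the stated one is then purely algebraic, using Cromwell's continued-fraction identity $\{\ldots,b_i,b_{i+1},\ldots\}\sim\{\ldots,b_i\mp 1,\mp 1,b_{i+1}\mp 1,\ldots\}$ at each sign change; this sidesteps the delicate block-boundary bookkeeping you anticipate and avoids having to check that the inductive step really produces another spiral link with $r=2$. For the braid index the paper also takes a shorter path: it does \emph{not} construct an explicit braid on $\ell+1$ strands (indeed the text explicitly remarks that the proof does not provide one), but instead applies Murasugi's formula for alternating fibred knots directly, using $c(L)=s+\ell-1$ together with the genus $g=\tfrac12(s-1)$ from Theorem~\ref{spiral}\ref{num:th6} to get $b_{\mathrm{ind}}(L)=c(L)-2g+1=\ell+1$ in one stroke. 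Your two-sided bound would also work, but the upper bound via an explicit $(\ell{+}1)$-strand word is the harder half and is unnecessary here.
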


The proof is given in the Appendix.
Recall that a lemniscate knot is just a special case of a spiral knot and that the number of loops $\ell$ is equal to the number $\ell$ defined implicitly in the theorem. 
Since in Table \ref{tab:ell=3} we consider the $b>0$ and $\ell=3$ cases, Theorem \ref{conway} confirms the patterns indicated in the tables. 
While the proof of Theorem \ref{conway} gives a formula for the braid index $b_{\mathrm{ind}}(L)$, it does not provide a form of a braid on $b_{\mathrm{ind}}(L)$ strands. 
From the known braid words for $\ell=3$ we expect the minimal braid word of $L=L(s,r=2,\ell=3)$ to be of the form $ \sigma_{1}^n \sigma_2^{-1} \sigma_{1} \sigma_{2}^{-n+1} \sigma_{3} \sigma_{2}^{-1} \sigma_{3}^n $ if $s=3n+1$ and $\sigma_{1}^n \sigma_{2}^{-1} \sigma_{1} \sigma_{2}^{-n} \sigma_{3} \sigma_{2}^{-1} \sigma_{3}^n $ if $s=3n+2$.

We speculate that any spiral knot with small $r$ (hence for any such lemniscate knot), $c = (r-1)(s + \ell - 1)$, which would generalise known $c$ for torus knots (with $\ell = 1$) and $c$ in Theorem \ref{conway}. 
If the knot is alternating and $r$ is a prime power, using \cite{murasugi:1991braid} and \cite{betvwy:2010spiral}, the braid index then should be of the form $b_{\mathrm{ind}}=(r-1)\ell+1s$. 
While examples for low $r$ are consistent with these formulas, a theorem by Lee and Seo \cite{ls:2010formula} implies that if $r>\max\{n_{1},n_{2}\ldots,n_{\ell}\}$, then $b(L(s,r,\boldsymbol{\varepsilon}))=s$, where $n_{i}$ is defined as in Theorem \ref{conway}.
Hence if $r > \max\{n_{1},n_{2}\ldots,n_{\ell}\}$ is a prime power and $L(s,r,\boldsymbol{\varepsilon})$ is alternating, the crossing number is of the form $c=r(s-1)$. 
This means that in this case the braid diagram in its spiral form minimises both the braid index and the crossing number.
A theorem by Lee \cite{lee:2004alexander} gives an upper bound for $r\leq 2g+1$ where $g$ is genus.
Combining this with Theorem \ref{spiral}\ref{num:th6} and Theorem \ref{conway} it can be shown that $6_2$ (the closure of $(\sigma_{1}\sigma_{2}\sigma_{3}\sigma_{4}^{-1})^{2})$ is spiral, but not lemniscate and $8_{20}$ (the closure of $(\sigma_{1}^{3} \sigma_{2}^{-1})^{2})$ is the closure of a homogeneous braid, but not spiral.

This concludes our discussion of the properties of the lemniscate knots.
We now show how the closures of the corresponding parameteric braids leads to complex maps of which the lemniscate knots are nodal lines.

\section{Complex maps and fibrations for lemniscate knots}\label{sec:fibration}

The construction of polynomial maps $f:\mathbb{R}^3 \longrightarrow\mathbb{C}$ where the nodal lines form lemniscate knots was outlined at the end of Section \ref{sec:intro}.
In this section, we prove that these maps indeed have the desired knotted zeros and are fibrations (for appropriate values of $a$ and $b$ in (\ref{eq:XYsrl})), and then briefly explore some generalisations of the lemniscate knot construction.
With $\ell$, $s$ and $r$ be positive integers with $s$ and $\ell$ coprime, (\ref{eq:XYsrl}) gives a parametrisation $(X_{j}^{s,r}(h),Y_{j}^{s,r,\ell}(h),h)$, where $j=1,2,\ldots,s$ and $h\in[0,2\pi]$ of the lemniscate braid with $\ell$ loops, $s$ strands and $r$ repeats of the basic braid word. 
These give rise to the family of complex polynomials $p^{s,r,\ell}_h$ as in (\ref{eq:polydef}) with roots $Z^{s,r,\ell}_{j}(h) = X^{s,r}_{j}(h) + \mathrm{i}Y^{s,r,\ell}_{j}(h)$. 
By construction, the lemniscate braid in its parametrisation $(X_{j}^{s,r}(h),Y_{j}^{s,r,\ell}(h),h)$ is the preimage of zero of $p^{s,r,\ell}$ as a map from $\mathbb{C}\times[0,2\pi]\longrightarrow\mathbb{C}$.

It can be shown, for example by using induction on the number of factors and elementary arithmetic of roots of unity, that due to the trigonometric form of the roots $Z^{s,r,\ell}_{j}$ the map $p^{s,r,\ell}:\mathbb{C}\times[0,2\pi]\longrightarrow\mathbb{C}$, $p^{s,r,\ell}(u,h)=p^{s,r,\ell}_{h}(u)$ can be written as a polynomial in the variables $u$, $\exp(\mathrm{i}h)$ and $\exp(-\mathrm{i}h)$ (where, of course, $\exp(\mathrm{i}h)\exp(-\mathrm{i}h) = 1$). 
Hence $p^{s,r,\ell}$ can be seen as the restriction of a complex map $f:\mathbb{C}^2\longrightarrow\mathbb{C}$ to the set $\mathbb{C}\times S^{1}$ (where necessary, $S^1$ is now the complex unit circle $\exp(\mathrm{i}h)$) and $f$ is a polynomial in complex variables $u$, $v$ and $\overline{v}$. 
$f$ is derived from $p^{s,r,\ell}$ by writing $v$ for every instance of $\exp(\mathrm{i}h)$ and $\overline{v}$ for every instance of $\exp(-\mathrm{i}h)$ in the polynomial expression of $p^{s,r,\ell}$ (from the construction defined above, no term in the polynomial has any occurrence of $v \overline{v}$). 
In doing so we identify $h=0$ and $h=2\pi$, which closes the ends of the braid. 
Thus we have $f^{-1}(0)\cap(\mathbb{C}\times S^{1}) = K$, where $K$ is the lemniscate knot $L(s,r,\ell)$.

We have not so far considered the positive stretching parameters $a,b$ in (\ref{eq:XYsrl}).
Although the construction is valid for any $a,b>0$, in order to guarantee that we get the desired knot, i.e.~$f^{-1}(0)\cap S^{3}=K$, we need to consider specific choices for these parameters. 
We consider the one-parameter family of parametrisations $\bigcup_{j=1}^{s}\left(X_{j}^{s,r}(h),Y_{j}^{s,r,\ell}(h),h\right)$ where $a = \lambda a_1, b = \lambda b_1$, $\lambda$ is a positive parameter, and $a_1$ and $b_1$ are fixed positive real numbers. 
To indicate the dependence of $Z^{s,r,\ell}_{j}(h)=X_{j}^{s,r}(h)+iY_{j}^{s,r,\ell}(h)$, $p^{s,r,\ell}$ and $f$ on $\lambda$, we will write these maps respectively as $Z_{\lambda,j}$, $p_{\lambda}$ and $f_{\lambda}$ (suppressing other suffixes).

We want to show that, for small enough $\lambda$, $f^{-1}(0)\cap S^{3}$ is isotopic to the desired knot $K$, using the complex coordinates of the unit 3-sphere, $S^3 = \{(u,v); |u|^2 + |v|^2 = 1\}$. 
Note in particular that for sufficiently small $\lambda$, $|Z_{\lambda,j}(h)| < 1$, i.e.~for all $j$, $Z_{\lambda,j}(h) \in \mathbb{D} \equiv \{u \in \mathbb{C}; |u|<1\}$.
Thus the image of $f_{\lambda}^{-1}(0)\cap(\mathbb{C}\times S^{1})$ under the map $P:\mathbb{D}\times(\mathbb{C}\backslash \{0\})\longrightarrow S^{3}$ (where $S^1 = \{\mathrm{e}^{\mathrm{i} h}; h\in [0,2\pi]\}$),
\begin{equation}
	P(u,\rho\mathrm{e}^{\mathrm{i}h})=(u,\sqrt{1-|u|^{2}}\mathrm{e}^{\mathrm{i}h})
	\label{eq:proj}
\end{equation}
is ambient isotopic to $K$ (the image of $P$ is independent of the modulus $\rho > 0$ of the second argument).
This result can be easily seen when $K$ is given in the parametrisation (\ref{eq:para}).
We now outline a proof of the ambient isotopy from the image under $P$ of $f_{\lambda}^{-1}(0)\cap(\mathbb{C}\times S^{1})$, known to be $K$, to $f_{\lambda}^{-1}(0)\cap S^{3}$ for small enough $\lambda$.

\begin{theorem}
\label{isotopy} For all positive integers $\ell$, $s$ and $r$ with $s$ and $\ell$ coprime and all choices of $a>0$ and $b>0$, there exists an $\epsilon$ such that $f_{\lambda}^{-1}(0)\cap S^{3}=K$ for all $\lambda<\epsilon$, where $K$ is the $(s,r,\ell)$ lemniscate knot.
\end{theorem}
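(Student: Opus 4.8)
\emph{Overview and localisation.} The plan is to exploit the explicit $\lambda$-scaling of the roots to trap $f_{\lambda}^{-1}(0)\cap S^{3}$, for small $\lambda$, inside an arbitrarily thin solid-torus neighbourhood of the core circle $\{u=0\}$, and to show that in natural coordinates on that neighbourhood $f_{\lambda}$ is a $C^{1}$-small perturbation of the braid polynomial $p^{s,r,\ell}$, whose zero set is exactly the braid closure $K$; stability of transverse preimages then delivers the ambient isotopy. Writing $a=\lambda a_{1}$, $b=\lambda b_{1}$ makes $Z^{s,r,\ell}_{\lambda,j}(h)=\lambda Z^{s,r,\ell}_{1,j}(h)$, so expanding $p_{\lambda}(u,h)=\prod_{j}(u-\lambda Z_{1,j}(h))$ in elementary symmetric functions and substituting $e^{\pm\mathrm{i}h}\to v^{\pm1}$ yields
\[
 f_{\lambda}(u,v,\overline{v})=\sum_{k=0}^{s}\lambda^{k}\,u^{\,s-k}\,g_{k}(v,\overline{v}),\qquad g_{0}\equiv 1,
\]
where the $g_{k}$ are fixed ($\lambda$-independent) polynomials in $v,\overline{v}$ with no $v\overline{v}$ term. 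On $S^{3}$ one has $|v|^{2}=1-|u|^{2}\le 1$, hence $|g_{k}(v,\overline{v})|\le C_{k}$; so at any zero $(u,v)\in S^{3}$ of $f_{\lambda}$ we get $|u|^{s}\le\sum_{k\ge 1}\lambda^{k}|u|^{s-k}C_{k}$, which forces $|u|\le C\lambda$ for a constant $C$ depending only on the $C_{k}$. Thus for $\lambda$ small enough $f_{\lambda}^{-1}(0)\cap S^{3}$ lies in the open solid torus $N=\{(u,v)\in S^{3}:|u|<1/2\}$, on which $v\neq 0$.

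\emph{Comparison with the braid polynomial.} The map $\Psi\colon N\to\{|u|<1/2\}\times S^{1}$, $\Psi(u,v)=(u,v/|v|)$, is a diffeomorphism with $\Psi^{-1}(u,e^{\mathrm{i}h})=(u,\sqrt{1-|u|^{2}}\,e^{\mathrm{i}h})$; note that $\Psi^{-1}$ coincides on this solid torus with the projection $P$ of (\ref{eq:proj}). Set $g_{\lambda}:=f_{\lambda}\circ\Psi^{-1}$, so $f_{\lambda}^{-1}(0)\cap S^{3}=\Psi^{-1}\big(g_{\lambda}^{-1}(0)\big)$. Comparing with $p_{\lambda}(u,h)=f_{\lambda}(u,e^{\mathrm{i}h},e^{-\mathrm{i}h})=\sum_{k}\lambda^{k}u^{s-k}g_{k}(e^{\mathrm{i}h},e^{-\mathrm{i}h})$, the only difference is that $g_{\lambda}$ carries $\sqrt{1-|u|^{2}}\,e^{\pm\mathrm{i}h}$ where $p_{\lambda}$ carries $e^{\pm\mathrm{i}h}$; since $|\sqrt{1-|u|^{2}}-1|\le|u|^{2}$ and the $g_{k}$ are smooth, $g_{\lambda}-p_{\lambda}$ is $O(\lambda^{s+1})$ in $C^{1}$-norm on $\{|u|\le R_{0}\lambda\}\times S^{1}$, with $R_{0}$ chosen so that this region contains both $g_{\lambda}^{-1}(0)$ and $p_{\lambda}^{-1}(0)$. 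On the other hand $p_{\lambda}(\cdot,h)=\prod_{j}(u-Z_{\lambda,j}(h))$ has, for every $h$, exactly $s$ simple zeros, pairwise at distance $\ge \lambda d$ where $d=\min_{h,\,i\neq j}|Z_{1,i}(h)-Z_{1,j}(h)|>0$ (positivity of $d$ is exactly where coprimality of $s$ and $\ell$ enters), and $|\partial_{u}p_{\lambda}|=\prod_{k\neq i}|Z_{\lambda,i}(h)-Z_{\lambda,k}(h)|\ge(\lambda d)^{s-1}$ at each zero; hence $0$ is a regular value of $p_{\lambda}$ on $\{|u|<1/2\}\times S^{1}$, with $p_{\lambda}^{-1}(0)=K$ the braid closure of (\ref{eq:XYsrl}) by (\ref{eq:polydef}), isotopic to the parametrisation (\ref{eq:para}).

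\emph{Conclusion.} Because $g_{\lambda}$ is $C^{1}$-close to $p_{\lambda}$ on a neighbourhood of $K$ containing all of $g_{\lambda}^{-1}(0)$, and $0$ is a regular value of $p_{\lambda}$, the standard stability of transverse preimages produces an ambient isotopy of $\{|u|<1/2\}\times S^{1}$, supported near $K$, carrying $K$ to $g_{\lambda}^{-1}(0)$. Concretely: for each $h$ the map $g_{\lambda}(\cdot,h)$ is non-vanishing on the small circles $|u-Z_{\lambda,j}(h)|=\lambda d/4$ and on $|u|=R_{0}\lambda$, and a winding-number count there (together with the $C^{1}$ bound, which makes each resulting zero transverse and orientation-preserving) shows it has exactly one simple zero $W_{j}(h)$, within $O(\lambda^{3})$ of each $Z_{\lambda,j}(h)$, and no others; the straight-line homotopy $(1-t)Z_{\lambda,j}(h)+tW_{j}(h)$ then keeps the $s$ strands disjoint and realises the isotopy, compatibly with the gluing at $h=0,2\pi$ that closes the braid. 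Applying $\Psi^{-1}$ and extending by the identity yields an ambient isotopy of $S^{3}$ from $\Psi^{-1}(K)=P(K)$ to $f_{\lambda}^{-1}(0)\cap S^{3}$; and $P(K)$ is ambient isotopic to $K$, as already noted using (\ref{eq:para}). Taking $\epsilon$ to be the largest $\lambda$ for which all of the above inequalities hold proves the theorem.

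\emph{Main obstacle.} The delicate point is the quantitative part of the last two paragraphs: organising the hierarchy of scales — the zero locus and strand spacing of order $\lambda$, the perturbation of order $\lambda^{s+1}$ in $C^{1}$, and the location radii of order $\lambda^{3}$ — so that the closeness of $g_{\lambda}$ to $p_{\lambda}$ is uniform in $h\in[0,2\pi]$ and is compatible with the identification that closes the braid. Everything else (the rescaling identity, the localisation bound, positivity of $d$, and the appeal to transversality stability) is routine.
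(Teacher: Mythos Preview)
Your proof is correct and shares the paper's overall strategy: localise $f_{\lambda}^{-1}(0)\cap S^{3}$ near the core circle $\{u=0\}$, compare with the braid polynomial $p_{\lambda}$ whose zero set is already the desired braid closure, and then use stability of simple zeros to manufacture the isotopy. The technical execution, however, differs. The paper tracks the roots $Z_{\lambda,j}(\rho,h)$ of $f_{\lambda}(\cdot,\rho e^{\mathrm{i}h})$ as smooth functions of $\rho=|v|$: distinctness at $\rho=1$ (where $f_{\lambda}=p_{\lambda}$) gives a $\delta$-interval $[1-\delta,1]$ on which the roots stay disjoint, $\lambda$ is then shrunk so that the unique $S^{3}$-intersection occurs at some $\rho_{\lambda,j,h}\in[1-\delta,1]$, and an explicit isotopy is written through $P$ by sliding $\rho$ from $\rho_{\lambda,j,h}$ back to $1$, finishing with the isotopy extension theorem. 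You instead fix the diffeomorphism $\Psi^{-1}=P$ once, obtain the quantitative localisation $|u|\le C\lambda$ from the expansion $f_{\lambda}=\sum_{k}\lambda^{k}u^{s-k}g_{k}$, estimate $g_{\lambda}-p_{\lambda}=O(\lambda^{s+1})$ in $C^{1}$ on the relevant annulus, and run a slice-by-slice Rouch\'e argument followed by a straight-line strand homotopy. Your route is more quantitative---it makes the hierarchy of scales (strand spacing $\sim\lambda d$, derivative $\sim(\lambda d)^{s-1}$, perturbation $\sim\lambda^{s+1}$) explicit and in principle yields an effective $\epsilon$---while the paper's is softer and a bit shorter, relying on smooth dependence of simple roots on coefficients rather than explicit bounds. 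Both hinge on the same two facts: coprimality of $s$ and $\ell$ gives the uniform separation $d>0$, and the scaling $Z_{\lambda,j}=\lambda Z_{1,j}$ is what makes everything uniform in $h$.
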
 
\begin{proof}
First note that by definition $Z_{\lambda,j}(h)=\lambda Z_{1,j}(h)$. 
Furthermore, for all fixed $\lambda$ and $v$ the function $f_{\lambda}(\cdot,v)$ is a polynomial in $u$ of degree $s$. 
We extend the definition of $Z_{\lambda,j}(\rho,h)$ with $j=1,2,\ldots,s$ to be the roots of $f_{\lambda}(\bullet,\rho \mathrm{e}^{\mathrm{i}h})$. 
This allows us to choose $\lambda$ small enough such that $Z_{\lambda,j}(\rho,h)\in\mathbb{D}$.

We consider the different $Z_{\lambda,j}(\rho,h)$ as functions of $\rho$.
These are are smooth at $\rho=1$, since for every $h$, the roots $Z_{\lambda=1,j}(1,h)$ of $f_{\lambda=1}(\bullet,\mathrm{e}^{\mathrm{i}h})$ are disjoint and disjoint roots of polynomials depend smoothly on coefficients. 
Hence there is a $\delta>0$ independent of $\lambda$ such that $Z_{\lambda,j}(\rho_{1},h)\neq Z_{\lambda,k}(\rho_{2},h)$ for all $j\neq k$ and $\rho_{1},\ \rho_{2}\in[1-\delta,1]$.
Now, we can choose $\epsilon$ such that for any $\lambda<\epsilon$, for every $h$ and every $j$ the curve $(Z_{\lambda,j}(\rho,h), \rho \mathrm{e}^{\mathrm{i}h})$ intersects $S^{3}$ at a unique point $(Z_{\lambda,j}(\rho_{\lambda,j,h},h), \rho_{\lambda,j,h}\mathrm{e}^{\mathrm{i}h})$ and $\rho_{\lambda,j,h} \in [1-\delta,1]$.
Then define $\Psi:P(f_{\lambda}^{-1}(0)\cap(\mathbb{C}\times S^{1}))\times[0,1] \longrightarrow S^{3}$ to be
\begin{equation}
	\Psi(P(Z_{\lambda,j}(1,h),\mathrm{e}^{\mathrm{i}h}),t) = P(Z_{\lambda,j}(t \rho_{\lambda,j,h} - (t-1)\rho,h),\rho \mathrm{e}^{\mathrm{i}h})
	\label{eq:isotopy}
\end{equation} 
By construction $\Psi$ is a smooth isotopy from $P(f_{\lambda}^{-1}(0)\cap(\mathbb{C}\times S^{1}))=K$ to $f_{\lambda}^{-1}(0)\cap S^{3}$ if $\lambda<\epsilon$. 
By the isotopy extension theorem, it extends to an ambient isotopy \cite{ek:1971imbeddings} showing that $f_{\lambda}^{-1}(0)\cap S^{3}=K$.
\end{proof}

Theorem \ref{isotopy} shows that the zero set of $f_{\lambda}$ restricted to $S^3$ has the desired knot type provided $\lambda$ is small enough. 
The proof does not specify how small $\lambda$ has to be, since it does not give a value for $\epsilon$. However, values can be calculated using bounds on the modulus of roots of polynomials and the implicit function theorem.
Note in particular that $\epsilon$ does not depend on $r$.
We have checked numerically for the explicit examples in Section \ref{sec:braids}, with $a_1=b_1=1$; for the $\ell = 2$ (Table \ref{tab:ell=2}), $\lambda \le 1$ is sufficient, and for the $\ell=3$ (Table \ref{tab:ell=3}), $\lambda \le 1/2$ is sufficient (from the numerical behaviour, we suspect these will suffice for higher $s$).

Since $f_{\lambda}$ is a polynomial in $u$ and $Z_{\lambda,j}(\rho,h)\neq Z_{\lambda,k}(\rho,h)$ for all $\lambda$, $j\neq k$, $h$ and $\rho\in[1-\delta,1]$, the points on the intersection of $f_{\lambda}^{-1}(0)$ and $S^{3}$ are regular points of $f_{\lambda}$, so $\nabla_{\mathbb{R}^{4}} f_{\lambda}$ has full rank (here and below, $\nabla_M$ denotes the gradient map on a manifold $M$).
Since the intersection of $f_{\lambda}^{-1}(0)$ with $S^{3}$ is transverse, the knot is in fact a set of regular points of the restriction of $f_{\lambda}$ to the unit 3-sphere, i.e.~$\nabla_{S^{3}} f_{\lambda}\neq 0$. 
This allows small smooth perturbations of the coefficients without altering the link type of the nodal set. 
This is particularly advantageous when additional physical constraints have to be taken into account.

The transversality of the intersection also builds a connection to the notion of transverse $\mathbb{C}$-links. 
These were defined by Rudolph \cite{rudolph:2005knot} to be the links that arise as transverse intersections of a complex plane curve and the unit 3-sphere. 
In our case, we do not deal with complex plane curves, but with zero sets of semiholomorphic polynomials, a significantly weaker notion.

Recall from Theorem \ref{spiral}\ref{num:fibred} that lemniscate knots are fibred. 
Having constructed a polynomial $f_{\lambda}:S^{3}\longrightarrow\mathbb{C}$ with $f_{\lambda}^{-1}(0)=K$, one might ask whether the map $\arg(f_{\lambda}):S^{3}\backslash K\longrightarrow S^{1}$ is a fibration. 
By the Ehresmann Fibration Theorem \cite{ehresmann:1950connexions} it is sufficient to check that the phase function $\arg(f_{\lambda})$ does not have any critical points, i.e.~$\nabla_{S^{3}} \arg(f_{\lambda})\neq 0$.
We have the following result.

\begin{theorem}\label{critical}
If $a,b>0$ are such that $p_{\lambda}(u,h)=\prod_{j=1}^{s}(u-Z_{\lambda,j}(h))$ does not have any phase-critical points $x\in\mathbb{C}\times[0,2\pi]$, i.e.~no points $x_{0}$ at which $\nabla_{\mathbb{C}\times[0,2\pi]} \arg(p_{\lambda})|_{x=x_{0}} = 0$ (for one value of $\lambda>0$ and equivalently for all $\lambda>0$), then there is no point $x_{0}'\in S^{3}\backslash K$ with such that at $x_{0}'$, $\nabla_{S^{3}}\arg(f_{\lambda})|_{x=x_{0}'}=0$ for all small enough $\lambda>0$. 
Hence $f_{\lambda}$ induces a fibration over $S^{1}$.
\end{theorem}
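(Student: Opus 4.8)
The plan is to verify directly, via the Ehresmann Fibration Theorem as already reduced above, that $\nabla_{S^{3}}\arg(f_{\lambda})\neq 0$ at every point of $S^{3}\backslash K$ once $\lambda$ is small enough. The whole difficulty is that the hypothesis controls $\arg(p_{\lambda})$ on the \emph{non-compact} cylinder $\mathbb{C}\times[0,2\pi]$, whereas the conclusion concerns $\arg(f_{\lambda})$ on the \emph{compact} $S^{3}$ --- and on $S^{3}$ the second coordinate is $v=\sqrt{1-|u|^{2}}\,\mathrm{e}^{\mathrm{i}h}$ rather than $\mathrm{e}^{\mathrm{i}h}$, the distortion $\sqrt{1-|u|^{2}}$ being negligible only where $|u|$ is small, that is, near the core circle $\{u=0\}$ of the solid torus containing $K$. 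Accordingly I would fix a large constant $c_{0}$ (depending only on $s$, $\ell$, $a_{1}$, $b_{1}$) and split $S^{3}=U_{\mathrm{in}}\cup U_{\mathrm{out}}$, where $U_{\mathrm{in}}=\{|u|\le c_{0}\lambda\}$ is a thin solid torus around $\{u=0\}$ that contains $K$ (the $u$-coordinates of $K$ have modulus $O(\lambda)$ by Theorem \ref{isotopy}), and $U_{\mathrm{out}}=\{|u|\ge c_{0}\lambda\}$.

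On $U_{\mathrm{out}}$ no reference to the hypothesis is needed. Writing $f_{\lambda}(u,v,\overline{v})=\sum_{k=0}^{s}\lambda^{s-k}d_{k}(v,\overline{v})u^{k}$ with $d_{s}\equiv 1$ (this uses $Z_{\lambda,j}=\lambda Z_{1,j}$), one has $f_{\lambda}=u^{s}\bigl(1+\sum_{k<s}\lambda^{s-k}d_{k}(v,\overline{v})u^{k-s}\bigr)$. On $U_{\mathrm{out}}$, $|\lambda^{s-k}u^{k-s}|\le c_{0}^{-(s-k)}$ and the $d_{k}$ are bounded on $|v|\le 1$, so for $c_{0}$ large the bracket and its derivative along the vector field $V=(\mathrm{i}u,0)$ (which is tangent to $S^{3}$ and nonzero whenever $u\neq 0$) are uniformly small, whence $V\!\arg(f_{\lambda})=s+(\text{uniformly small error})\neq 0$ on $U_{\mathrm{out}}$ for all $\lambda\le 1/c_{0}$.

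On $U_{\mathrm{in}}$ I would rescale by $u=\lambda\tilde{u}$, which turns $U_{\mathrm{in}}$ into the fixed compact set $\{|\tilde{u}|\le c_{0}\}$ and gives $f_{\lambda}|_{S^{3}}=\lambda^{s}q_{\lambda}(\tilde{u},h)$ with $q_{\lambda}(\tilde{u},h)=\sum_{k}d_{k}(\mu_{\lambda}\mathrm{e}^{\mathrm{i}h},\mu_{\lambda}\mathrm{e}^{-\mathrm{i}h})\tilde{u}^{k}$ and $\mu_{\lambda}=\sqrt{1-\lambda^{2}|\tilde{u}|^{2}}$; the crucial point is that $q_{\lambda}$ extends smoothly to $\lambda=0$, where $q_{0}=p_{1}$, so $q_{\lambda}\to p_{1}$ in $C^{1}$ on $\{|\tilde{u}|\le c_{0}\}\times S^{1}$. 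Since $\arg(f_{\lambda}|_{S^{3}})=\arg(q_{\lambda})$ in these coordinates ($\lambda^{s}>0$), it suffices to exclude critical points of $\arg(q_{\lambda})$ on $\{|\tilde{u}|\le c_{0}\}\times S^{1}$, which I would do in two overlapping pieces. Away from a fixed tubular neighbourhood of the rescaled knot (which is $O(\lambda)$-close to the braid $p_{1}^{-1}(0)$), $\arg(q_{\lambda})$ is $C^{1}$-close to $\arg(p_{1})$, whose gradient is bounded below on the relevant compact set by the hypothesis (and compactness), so $\|\nabla\arg(q_{\lambda})\|$ is bounded below there for $\lambda$ small. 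Inside the tube I would instead note that the antiholomorphic $\tilde{u}$-derivative of $q_{\lambda}$ is $O(\lambda^{2})$ while the holomorphic one tends to $\partial_{\tilde{u}}p_{1}$, nonzero at the simple roots $Z_{1,j}(h)$; hence for small $\lambda$ the real Jacobian of $q_{\lambda}(\cdot,h)$ tends to $|\partial_{\tilde{u}}p_{1}|^{2}>0$, so $q_{\lambda}$ is a submersion near its zero set and $\arg(q_{\lambda})$, being locally the argument of a coordinate function, has nonvanishing gradient off that set. Patching covers $U_{\mathrm{in}}\backslash K$.

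Combining the two regions, there is an $\epsilon>0$ so that for all $\lambda<\epsilon$ the map $\arg(f_{\lambda})\colon S^{3}\backslash K\to S^{1}$ is a submersion, and Ehresmann's theorem yields the fibration. I expect the main obstacle to be the reconciliation of the two incompatible descriptions that the two regions force upon us: near $K$ the rescaled variable $\tilde{u}$ and the hypothesis on $p_{1}$ are unavoidable (together with the separate submersion argument right at $K$, where $C^{1}$-closeness of phases breaks down), while away from $\{u=0\}$ only the angular coordinate $\arg u$ and the dominance of the $u^{s}$ term are available; arranging that the two regions overlap for a common range of $\lambda$, and that the locus where $\nabla\arg(p_{1})$ could vanish never collides with the rescaled knot, is where the genuine care lies.
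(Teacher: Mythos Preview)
Your proposal is correct and follows essentially the same two-region strategy as the paper: an outer piece $\{|u|>c\lambda\}$ on which the angular derivative $\partial\arg(f_{\lambda})/\partial\arg(u)$ is close to $s$, and an inner piece on which the hypothesis on $p_{\lambda}$ (equivalently $p_{1}$) controls $\arg(f_{\lambda})$, the two pieces being arranged to cover $S^{3}$ once $\lambda$ is small. Your rescaling $u=\lambda\tilde u$, the explicit $C^{1}$-convergence $q_{\lambda}\to p_{1}$ on a fixed compact, and the separate submersion argument near $K$ are careful elaborations of what the paper compresses into a one-line continuity claim (``$|\nabla_{S^{3}}\arg(f_{\lambda})|>0$ for $1-\epsilon<|v|\le 1$''), but the overall architecture is the same.
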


\begin{proof}

Note that the derivative $\partial \arg(f_{1})/\partial \arg(u)$ converges uniformly to $s$ on $(u,v)\in\{\mathbb{C}^2 : |u|=R\}$ as $R\to\infty$. 
In particular, at some $R>0$, $\partial \arg(f_{1})/\partial \arg(u)>0$ when evaluated at all $(u,\ v)\in\mathbb{C}^2$ with $|v|\leq 1$ and $|u|>R$.
This means that for $f_{\lambda}$ with $\lambda>0$ the same statement holds for all $|u|>\lambda R$. 

Since $p_{\lambda}$ does not have any phase-critical points and $\partial \arg\left(f_{\lambda}(u,\mathrm{e}^{\mathrm{i}h})\right)/\partial \arg(u) =\partial \arg\left(p_{\lambda}(u,h)\right)/\partial \arg(u)$, it follows from the continuity of $\nabla_{\mathbb{R}^4} \arg(f_{\lambda})$ away from the zeros of $f_{\lambda}$ that there is a $\epsilon>0$ such that $\left|\nabla_{S^3} \arg(f_{\lambda})(u,v)\right|>0$ for all $v$ with $1-\epsilon<|v|\leq 1$. 
Now choose $\lambda\in\mathbb{R}$ such that $\lambda^2  R^2 + (1-\epsilon)^2<1$ and it follows that $\left|\nabla_{S^3\backslash K} \arg(f_{\lambda})\right|>0$ everywhere, as either $|u|>\lambda R$ or $1-\epsilon<|v|\leq 1$. 
Hence for all sufficiently small $\lambda$, $f_{\lambda}$ does not have any phase-critical points on $S^{3}\backslash K$.
\end{proof}

Theorem \ref{critical} gives a sufficient condition for an explicit fibration of the knot complement over $S^1$ as the argument of a semiholomorphic polynomial. 
We are not aware of any procedure that would find values for $a$ and $b$, such that this condition is satisfied for $p_{\lambda}$. 
In fact, it is not even clear if such values always exist. 
We have checked numerically for the explicit lemniscate knots identified in Section \ref{sec:braids}, $a=b=\lambda\le 1$ is sufficient, meaning the maps constructed for them (with the previous values of $\lambda$) are, indeed, fibrations.

We have proved, therefore, that the nodal lines of the functions $f_{\lambda}:\mathbb{R}^3 \longrightarrow \mathbb{C}$, for small enough $\lambda$, indeed form the $(s,r,\ell)$ lemniscate knot or link, and for the explicit knots considered in Section \ref{sec:braids}, these are in fact explicit fibrations.
In spite of these technical details, the procedure for constructing complex functions with zeros in the form of lemniscate knots (or indeed fibrations of lemniscate knots) is just that in Section \ref{sec:intro} with appropriate choices of $a$ and $b$.
For low $s$, we found that it is sufficient that $a,b \le 1$ if $\ell = 2$, and $a,b\le 1/2$ if $\ell = 3$.
The braid polynomial $p$ is constructed as from Equation \ref{eq:polydef} and then multiplied out and simplified. Then all occurrences of $\exp(\mathrm{i} h)$ are replaced by $v$, and $\exp(-\mathrm{i} h)$ by $v$.
Rewriting $u$ and $v$ in terms of $x,y,z$ according to (\ref{eq:coords}) gives an explicit complex polynomial of three-dimensional real space with a nodal knot or link.
If $a$ and $b$ are rational, all coefficients are rational, so by multiplying by a constant, we can make sure that $f$ has integer coefficients. 
This generalises the procedure for the figure-8 knot $L(3,2,2)$ whose function $f_{\mathrm{fig}\mbox{-}8}$ was given in (\ref{eq:ffig8}).
Other examples are
\begin{align}
f^{(5,r,2)}(u,v)& = 1024 u^5 - 960 u^3 - 160 u^2 (v^r + \overline{v}^r) + 20 u (21 - 10 (v^r + \overline{v}^r)) - 82 (v^r + \overline{v}^r) - v^{2r} + \overline{v}^{2r}, \label{eq:f5r2} \\
f^{(4,r,3)}(u,v)& = 20736 u^4 - 576 u^2 (8 + 3 (v^r - \overline{v}^r)) + (92 - 39 v^r - 231 \overline{v}^r + 6 v^{2r} + 30 \overline{v}^{2r} - v^{3r} - \overline{v}^{3r}). \label{eq:f4r3}
\end{align}
This method of constructing polynomials with knotted nodal lines works for every lemniscate knot.
It can be easily seen from (\ref{eq:XYsrl}) and the definitions of $p$ and $f$ that the degree of $f$ in $u$ is equal to $s$ (the number of strands in the braid), and its degree in $v$ and $\overline{v}$ is equal to $r\ell$.

All results so far apply to lemniscate knots, which form a very restricted family of knots with particular symmetries as discussed in Section \ref{sec:braids}. 
However, many statements here extend to knots which are not closures of braids with a lemniscate parametrisation. 
We will briefly consider two further families here.

Instead of starting with a braid where each strand follows a fixed lemniscate figure, we can consider braids where the strands follow a \emph{rotating} lemniscate figure. 
The $s$ strands are then parametrised by 
\begin{equation}
	Z_{j}^{s,r,\ell,n}(h)=e^{\mathrm{i}hn}(X_{j}^{s,r}(h)+\mathrm{i}Y_{j}^{s,r,\ell}(h)), 
	\label{eq:rotate}
\end{equation}
where $n\in\mathbb{Z}$ is the number of full clockwise rotations that the lemniscate figure performs as $h$ increases from $0$ to $2\pi$. 
Again the function $p_{\lambda}(u,h)=\prod_{j=1}^{s}(u-\lambda Z_{j}^{s,r,\ell,n}(h))$ is a polynomial in $u$, $\mathrm{e}^{\mathrm{i}h}$ and $\mathrm{e}^{-\mathrm{i}h}$ and the proof of Theorem \ref{isotopy} implies that for small enough $\lambda$, replacing $e^{\mathrm{i}h}$ by $v$ and $e^{-\mathrm{i}h}$ by $\overline{v}$ results in a semiholomorphic polynomial $f_{\lambda}$ such that its nodal set on $S^{3}$ is ambient isotopic to the closure of the braid with strands defined by (\ref{eq:rotate}).

\begin{figure}
\begin{center}
\begin{subfigure}[t]{0.4\textwidth}
\centering
\includegraphics[height=4.5cm]{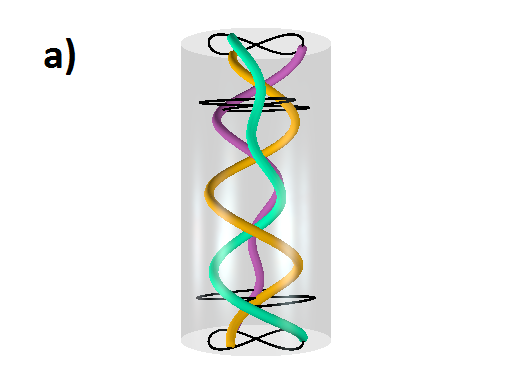}
\end{subfigure}
\begin{subfigure}[t]{0.4\textwidth}
\centering
\includegraphics[height=4.5cm]{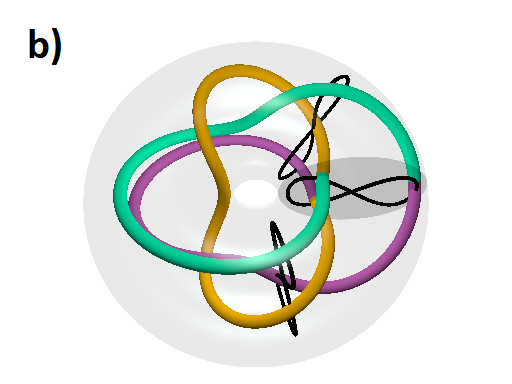}
\end{subfigure}
\caption{
   Rotating lemniscate braid closing to the composite of two trefoil knots. 
   (a) Braid executing a rotating lemniscate, $(s,r,\ell,n) = (3,2,2,1)$, with braid word $(\sigma_1^{-1}\sigma_2)^2 (\sigma_{1}\sigma_{2}\sigma_{1})^{-2}$. 
   (b) Closure of the braid forming the composite of two trefoil knots (with same chirality), with minimum braid word $\sigma_1^{-3}\sigma_2^{-3}$.  
}
\label{fig:rotating}
\end{center}
\end{figure}

In the case of $s=3$, $r=2$, $\ell=2$ and $n=1$, this results in the connected sum of trefoil knots as can be seen in Figure \ref{fig:rotating}. 
It is not hard to show that in general rotating the braid which closes to the $L(s,r,\ell)$ by $n$ turns, is isotopic to the braid with word $w^{s,r,\ell}\Delta_{s}^{-2n}$ where $\Delta_{s}$ is the Garside element of the braid group $B_{s}$ given by $\Delta_{s}=(\sigma_{1})(\sigma_{2}\sigma_{1})(\sigma_{3}\sigma_{2}\sigma_{1})\ldots(\sigma_{s-1}\ldots\sigma_{1})$.

Alternatively, more complicated braids can be built up by a generalisation of the notion of cabling (indeed, generalising the torus knot construction by cabling in the context of polynomials with knotted nodal lines goes back to Brauner \cite{brauner:1928geometrie}). 
Rather than considering braids where single strands execute generalised lemniscate trajectories, a single strand/root in the polynomial may be replaced by a cluster of strands/roots, executing their own figure which we call an `epicycle': such an epicycle of strands may execute a generalised lemniscate figure, within a larger figure. 
This process can be iterated; however, closures of such braids typically have many crossings, so this does not generate tabulated knots.
One example which is tabulated is the simplest satellite knot $13n_{4587}$, formed as a cable of a $(3,2)$ trefoil knot, where the original trefoil strands are replaced by a pair of strands executing a $\ell=1$ circle through a quarter turn as $0 \le h \le 2\pi$.
With the outer radius unity, inner epicycle radius $1/4$, this braid has polynomial 
\begin{equation}
   p_h(u) = 
   \left(u - (\mathrm{e}^{3\mathrm{i}h/2} + \tfrac{\mathrm{i}}{4} \mathrm{e}^{\mathrm{i}h/4})\right)
   \left(u - (\mathrm{e}^{3\mathrm{i}h/2} - \tfrac{\mathrm{i}}{4} \mathrm{e}^{\mathrm{i}h/4})\right)
   \left(u + (\mathrm{e}^{3\mathrm{i}h/2} + \tfrac{\mathrm{1}}{4} \mathrm{e}^{\mathrm{i}h/4})\right)
   \left(u + (\mathrm{e}^{3\mathrm{i}h/2} - \tfrac{\mathrm{1}}{4} \mathrm{e}^{\mathrm{i}h/4})\right).
   \label{eq:cable}
\end{equation}
This leads to the function $f(u,v) = 256 u^4 - 512 u^2 v^3 + 64 u v^2 - v + 256 v^6$ (which also gives a fibration) whose nodal line is the desired cable knot. 
This example is illustrated in Figure \ref{fig:cable}.

\begin{figure}
\begin{center}
\begin{subfigure}[t]{0.4\textwidth}
\centering
\includegraphics[height=4.5cm]{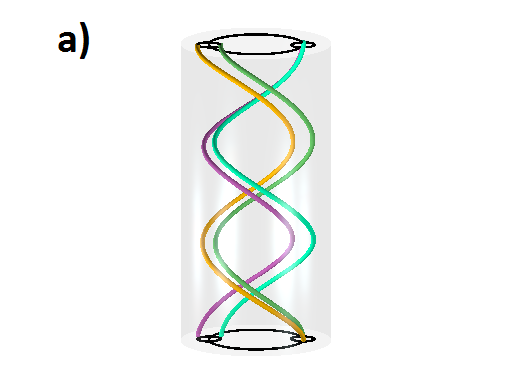}
\end{subfigure}
\begin{subfigure}[t]{0.4\textwidth}
\centering
\includegraphics[height=4.5cm]{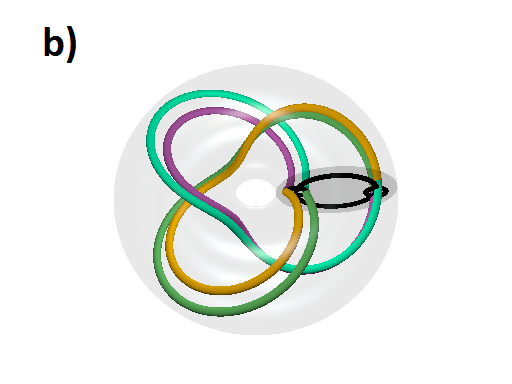}
\end{subfigure}
\caption{Cabling a helical braid and its torus closure.
(a) A $r=3$ double helix braid has its roots replaced by an `epicycle' of two strands, transversally executing a circular trajectory with a quarter turn, given by the roots of (\ref{eq:cable}). 
(b) Closure of the braid, isotopic to the simplest satellite knot $13n_{4587}$.}
\label{fig:cable}
\end{center}
\end{figure}

One can consider many other generalisations of the construction, leading to other families of knots realisable as nodal lines of complex maps, or fibrations.

\section{Physical examples of lemniscate knotted fields}\label{sec:applications}

An outstanding problem in understanding the theoretical implications of knots in physics is the creation of knots as initial conditions in physical systems. 
The complex maps $f(u,v,\overline{v})$ (with appropriate values of $a$ and $b$), and particularly the related $F(\boldsymbol{r})$ (obtained from $f$ using the substitution (\ref{eq:coords}) and taking the numerator) are a set of model knotted fields, with the $(r,s,\ell)$ lemniscate knot or link as nodal line.
$F$ is a polynomial expression in $x$, $y$ and $z$, and therefore quite amenable to analytic or numerical approaches.
We will briefly discuss knotted scalar fields in quantum mechanics and optics, and then present a more extended discussion of knotted hopfions extending the construction to rational maps.

For instance, a knotted nodal lines can be realised in a complex wavefunction $\psi(\boldsymbol{r})$ of quantum mechanics by multiplying $F$ by a positive definite, normalizable weight function $W(\boldsymbol{r})$ such as a gaussian, e.g.~$\psi(\boldsymbol{r}) = F(\boldsymbol{r}) \mathrm{e}^{-|\boldsymbol{r}|^2}$.
Such knots are not expected to persist dynamically, and dissolve under evolution due to the Schr\"odinger equation \cite{king:2010thesis}.
Knots can also be directly embedded into static bound-state wavefunctions in systems whose eigenfunctions are polynomials times a weight function, such as the three-dimensional harmonic oscillator (for which $W(\boldsymbol{r})$ is again a gaussian) or the hydrogen atom (for which $W(\boldsymbol{r}) = \mathrm{e}^{-|\boldsymbol{r}|}/|\boldsymbol{r}|$).
These knots are a sum of nondegenerate energy eigenstates and therefore will not persist in time, unlike knotted nodal lines found in these systems either by construction \cite{berry:2001hydrogen} or by random sampling \cite{td:2016tangled}.

\begin{figure}
\begin{center}
\begin{subfigure}[t]{0.4\textwidth}
\centering
\includegraphics[height=4.5cm]{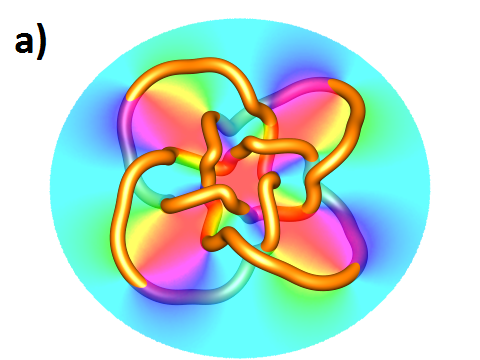}
\end{subfigure}
\begin{subfigure}[t]{0.4\textwidth}
\centering
\includegraphics[height=4.5cm]{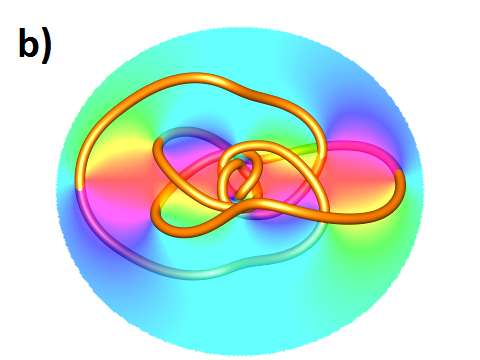}
\end{subfigure}
\caption{
   Knotted optical vortices propagating from holograms given by $F(x,y,0)$.
   In each case, the nodes of the polynomial solution of the paraxial equation coinciding with $F(x,y,z=0)$ are shown, with the coloured disk representing, by hue, the phase (argument) $\arg F(x,y,0)$ of the hologram.
   (a) The knot $8_{18}$ corresponding to lemniscate knot $L(3,3,2)$. 
   (b) The knot $7_7$ corresponding to lemniscate knot $L(5,2,3)$.  
}
\label{fig:prop}
\end{center}
\end{figure}

The motivation of the knot construction of \cite{dkjop:2010isolated} was to embed knotted nodal lines (optical vortices) into the complex amplitude of a paraxial light beam such as that of structured light from a laser.
In this case, the knotted fields $F(\boldsymbol{r})$ does not satisfy the PDE of paraxially propagating light (i.e.~the $2+1$ Schr\"odinger equation with propagation direction $z$ as time); however, it was found for several examples of torus knot and the figure-8 knot that taking a central plane $F(x,y,0)$ of the knotted field, and propagating this polynomial field forwards and backwards \cite{dgkma:2011polynomial} gives a new polynomial $\widetilde{F}(\boldsymbol{r})$ which has a nodal line in the form of the same knot.
We do not have a proof of this behaviour.
In some cases where the correct knot does not occur by evolving $F(x,y,0)$, the correct knot occurs by evolving the `overhomogenised' field $F(x,y,0)(1+x^2+y^2)^n,$ for some choice of positive integer $n$ \cite{king:2010thesis}; again we have no proof of this.
Such propagating polynomial beams can then be embedded in a normalizable beam by multiplying by a weight function $W(x,y)$ (such as a gaussian), and this approach led to the successful experimental generation of several nodal knots \cite{dkjop:2010isolated}.
Two further theoretical examples are shown in Figure \ref{fig:prop}.

Physical applications are not limited to complex scalar fields of the form $\mathbb{R}^3 \longrightarrow \mathbb{C}$.
Many examples, particularly in condensed matter physics or field theory, utilise maps between spaces which have a richer topological structure such as a map to the unit 2-sphere, $\boldsymbol{\varphi}:\mathbb{R}^3 \longrightarrow S^2$.
The field is a three-component unit vector $\boldsymbol{\varphi}=(\phi_1,\phi_2,\phi_3)$, and finite energy requires that the field tends to a constant at spatial infinity, chosen to be $\boldsymbol{\varphi}^\infty=(0,0,1)=\boldsymbol{e}_3$.
This condition compactifies real space to $S^3$, hence the field can be extended to the map $\boldsymbol{\varphi}:S^3 \longrightarrow S^2$. 
Such maps are elements of $\pi_3(S^2)\cong\mathbb{Z}$, and so each $\boldsymbol{\varphi}$ is indexed by an integer $Q$, referred to as the \emph{topological charge}.
Generically, preimages of points on $S^2$ are unions of disjoint loops in $\mathbb{R}^3$ or $S^3$. 
It has been shown \cite{bt:1982} that $Q$ is equal to the linking number of any pair of loops which are preimages of two distinct points on $S^2$.
Such fields $\boldsymbol{\varphi}$ occur in the Skyrme-Faddeev model in particle physics \cite{faddeev:1975quantization}. The fields of the form $\boldsymbol{\varphi}$ which minimise a certain nonlinear energy functional are topological solitons called hopfions.
The positions where the field takes the value furthest from the vacuum $\boldsymbol{\varphi} = (0,0,-1)$ is identified as the hopfion.
Substantial numerical work, e.g.~\cite{sutcliffe:2007knots,hs:2000ground} has shown that the model has hopfion solutions where hopfions can be unknotted loops, links or knots.

A method to generate initial conditions with torus knotted hopfions (non-minimised) was given by Sutcliffe \cite{sutcliffe:2007knots}. 
Real space $\mathbb{R}^3$ is identified with $S^3 \subset \mathbb{C}^2$ using the degree-one spherically-equivariant map,
\begin{equation}
   (u,v)=\left(\cos d(r) +i \frac{\sin d(r)}{r} z, (x + \mathrm{i} y) \frac{\sin d(r)}{r}\right),	
   \label{eq:UVdef}	
\end{equation}
where $\boldsymbol{r} = (x,y,z)\in \mathbb{R}^3$, $r=|\boldsymbol{r}|$ and $d(r)$ is a monotonically decreasing profile function, with the boundary conditions $d(0)=\pi, d(\infty)=0$.
As in previous Sections we describe $S^{3}$ by $\{(u,v)\in\mathbb{C}^{2}:|u|^{2}+|v|^{2}=1\}$. 
The only difference lies in the different choice of projection map, \ref{eq:UVdef} rather than \ref{eq:coords}.
We then define the map $W:\mathbb{C}^2\longrightarrow\mathbb{CP}^1$ as the rational map
\begin{equation}
   W=\frac{g(u,v)}{f(u,v)}, \label{eq:rationalmap}	
\end{equation}
of two complex-valued polynomials $f(u,v), g(u,v)$. 
Stereographic projection gives a Riemann sphere coordinate, $W$, on the target $2$-sphere of $\boldsymbol{\varphi}$, by $W = (\phi_1+\mathrm{i} \phi_2)/(1+\phi_3)$, so the map $\boldsymbol{\varphi}:S^3\longrightarrow S^2$ is equivalent to the rational map $W:S^3\longrightarrow\mathbb{CP}^1\cong S^2$.

With this setup, the knottedness of the configuration follows from an appropriate choice of $f(u,v)$ in (\ref{eq:rationalmap}), since nodes of $f$ correspond, by stereographic projection, to loci where $\boldsymbol{\varphi} = (0,0,-1)$.
The simplest example is the axially-symmetric planar ring for which $W= v^n / u^m$, which has topological charge $Q=nm$. 
The rational map $W= (v^\alpha u^\beta)/(v^q - u^p)$ \cite{sutcliffe:2007knots} has a hopfion in the form of a $(p,q)$ torus knot (assuming $p$ and $q$ are coprime positive integers), since the denominator has the form of the Brauner map (\ref{eq:brauner}).
Assuming $\alpha$ and $\beta$ are integers, these maps have topological charge $Q=\alpha p + \beta q$.

We now extend this rational map construction to lemniscate knotted initial configurations for hopfions.
To achieve this we use the lemniscate polynomials, $f(u,v)=f_{\lambda}(u,v,\overline{v})$, in the denominator of the rational map (\ref{eq:rationalmap}).
We choose the numerator $g$ to be a constant times an integer power of $v$, say $v^N$. 
The boundary conditions are satisfied for any such $N$.
The topological degree of such a map can be calculated from the number of strands. 
As in \cite{sutcliffe:2007knots}, the topological charge $Q$ of $\boldsymbol{\varphi}$ is equal to the topological degree of $(g(u,v),f(u,v))$ extended to a function on the unit 4-ball $\mathbb{B}^4\subset\mathbb{C}^2 \longrightarrow\mathbb{C}^2$. 
By definition, this can be calculated by counting the number of preimages of a regular value, say $(\epsilon,0)$, weighted by the sign of the jacobian. 
Since $f$, the constructed polynomial from Section \ref{sec:fibration}, is holomorphic in $u$ and additionally $g$ does not depend on $u$ and is holomorphic in $v$, the Cauchy-Riemann equations imply that the sign of the jacobian is the same for every point in the preimage of the regular point $(\epsilon,0)$. 
Hence, for $g=v^N$, $Q$ is equal to the number of points that get mapped to $(\epsilon,0)$ by $(g(u,v),f(u,v))$, which is equal to 
\begin{equation}
 Q=\deg_v (g)  \deg_u (f)=Ns, \label{eq:degree}
\end{equation}  
where $s= \deg_u (f)$ is the number of strands used in the construction of $f$. 
This procedure gives knotted field configurations with $Q=Ns$ for any lemniscate knot and any integer $N$.
Furthermore, using $f(u,v)=\left(f_{\lambda}(u,v,\overline{v})\right)^{m}$ for some integer $m>1$ gives initial condition where $m$ copies of the same  knot coincide, i.e.~for every fixed $v$, the corresponding polynomial $f(\bullet,v)$ has $s$ distinct roots of multiplicity $m$.

Generalising our earlier figure-8 knot function (\ref{eq:ffig8}) to $r$ repeats,
we can construct the set of rational maps
\begin{equation}
W=\frac{64 v^N}{64u^3-12 u(2v^r-2\overline{v}^r + 1.5 ) - (14v^r + 14\overline{v}^r+v^{2r}-\overline{v}^{2r})}. \label{eq:fig8hopf}
\end{equation}
Changing the constant $3$ in the second summand of (\ref{eq:ffig8}) to $1.5$ and choosing $64$ in the denominator was found to give an initial condition with a clearer spatial distribution.
Figure \ref{hopf-im} shows the hopfion initial conditions of Equation (\ref{eq:fig8hopf}) for $r = 2$ and $3$.
These configurations do not themselves minimise the Skyrme-Faddeev energy function, and in fact the borromean rings ($r=3$) minimise to a hopfion in in the form of a $(3,4)$ torus knot, shown in Figure \ref{hopf-im} c).

\begin{figure}
       \centering
        \begin{subfigure}[b]{0.3\textwidth}
                \centering \includegraphics[width=\textwidth]{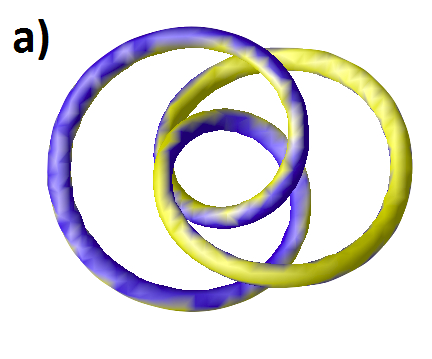}    
        \end{subfigure}
       \begin{subfigure}[b]{0.3\textwidth}
               \centering
    \includegraphics[width=\textwidth]{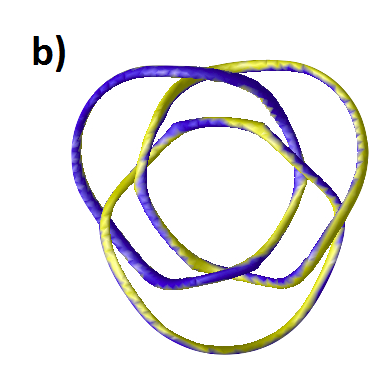}
       \end{subfigure}
        \begin{subfigure}[b]{0.3\textwidth}
                \centering \includegraphics[width=\textwidth]{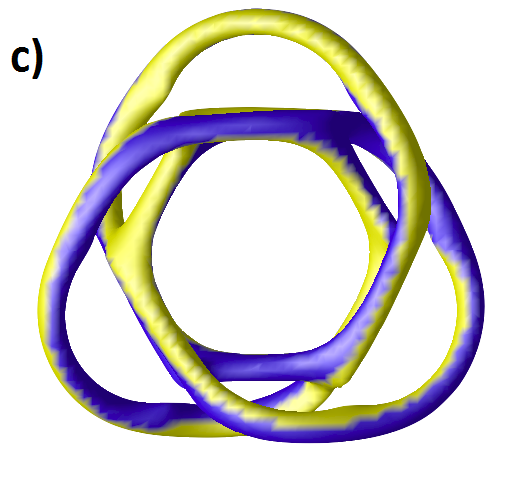}
         
        \end{subfigure}
        \caption{Lemniscate knotted initial conditions for hopfions using (\ref{eq:fig8hopf}) and a corresponding energy minimising torus knot. 
        Blue and yellow are preimages of different points of $S^2$ (blue is the south pole).
        (a) $r=2$, $N=2$; (b) $r = 3$, $N=1$; (c) The energy minimiser, when the borromean rings ($r=3$, $N=1$) are used as initial condition.
        }
        \label{hopf-im}
\end{figure}

The rational map construction $W = v^N/f(u,v,\overline{v})$ can also be used to create real vector fields representing vorticity which contain lemniscate knots, whose helicity is given by the topological charge $Q = Ns$ \cite{kfdi:2016weaving}.

\section{Lemniscate knots as real algebraic knots}\label{sec:AK}

The polynomials studied by Brauner, Milnor, Akbulut and King, Perron and Rudolph all have stronger properties than those constructed in Section \ref{sec:fibration}. 
Their interest in polynomials with knotted zeros was driven by the study of isolated singularities. 
Intersecting the zero set of these polynomials with a 3-sphere around the singularity yields the desired link not only for the unit 3-sphere, but for any 3-sphere of small enough radius.

The point $(x_{1},x_{2},x_{3},x_{4})\in \mathbb{R}^{4}$ is called a \emph{singular point} of a polynomial $f:\mathbb{R}^{4}\longrightarrow\mathbb{R}^{2}$ if the jacobian of $f$ does not have full rank at that point. 
A singular point is said to be \emph{isolated} (respectively weakly isolated) if there is a $\epsilon$-neighbourhood of $(x_{1},x_{2},x_{3},x_{4})$ within which it is the only singular point of $f$ (respectively such that it is the only singular point of $f$ in that neighbourhood that is part of the nodal set).  
Brauner's polynomials $f=u^{p}-v^{q}$ have an isolated singularity at $(0,0)\in\mathbb{C}^{2}$ and the zero set of $f$ on three-spheres of small radius is the same link $L$, in this case the $(p,q)$-torus link.

Links for which there exist such complex polynomials $\mathbb{C}^{2}\longrightarrow\mathbb{C}$ are called \emph{algebraic links} or links of isolated complex singularities. 
Similarly, for real polynomials $\mathbb{R}^{4}\longrightarrow\mathbb{R}^{2}$, the links for which such functions exist are called \emph{real algebraic} or links of isolated real singularities. 
It is known that algebraic links are iterated cables of torus knots satisfying an extra positivity condition (\cite{brauner:1928geometrie,milnor:1968singular,le:1972noeuds,burau:1933knoten,burau:1934verkettunen,zariski:1932topology,kahler:1929verzweigung}). 
A very good survey of the topic of this classification of algebraic links can be found in \cite{en:1985three}.

Milnor \cite{milnor:1968singular} showed that for any complex polynomial $f$ like this, $f/|f|$ defines a fibration of $S_{\epsilon}^{3}\backslash L$ over $S^{1}$. 
For the real case, i.e.~real polynomials $f:\mathbb{R}^{4}\longrightarrow\mathbb{R}^{2}$ with an isolated singularity at $0\in\mathbb{R}^{4}$, the question of which links arise as links of singularities is to our knowledge still open. 
Milnor's result however carries over to isolated real singularities. 
Links of isolated real singularities are fibred links and clearly complex algebraic links must also be real algebraic. 
Explicitly constructing real polynomials with isolated singularities and links of a given type around them has proven to be challenging with Perron's \cite{perron:1982noeud} and Rudolph's \cite{rudolph:1987isolated,rudolph:1984review} constructions of the figure-8 knot, which is not a complex algebraic knot, and a construction of certain odd knots by Looijenga \cite{looijenga:1971note} being the only ones of which we are aware.
Weakening the condition on the singularity of $f:\mathbb{R}^{4}\longrightarrow\mathbb{R}^{2}$, allows it to be weakly isolated, and all links arise as links of such singularities, as shown by Akbulut and King \cite{ak:1981all}.
Thus for every link $L$ there exists an $f$ and an $\epsilon>0$ such that the zero set of $f$ intersected with any three-sphere of radius less than $\epsilon$ is $L$. 
Again the proof is not constructive in the sense that it would allow for an algorithm that explicitly determines one such polynomial for a given link.

The construction in Section \ref{sec:fibration} generated, for every $a,b>0$, a family of polynomials $f_{\lambda}$ in $u$, $v$ and $\overline{v}$. 
Since $\lambda$ is only a stretching factor, it is easy to see that these functions satisfy the relation 
\begin{equation}
	f_{\lambda}(u,v)=\lambda^{s}f_{1}\left(u/\lambda,v\right). 
	\label{eq:stretch}
\end{equation}

Not all polynomials $f_{\lambda}$ a priori have the property that their zero set on $S^3$ is the desired link $L$. 
Theorem \ref{isotopy} implies that for all $a_1$, $b_1$ and small enough values of $\lambda$, the constructed semiholomorphic function $f_{\lambda}:\mathbb{C}\times\mathbb{R}^{2}\longrightarrow\mathbb{C}$ has the desired knot as its nodal set on $S^{3}$ which has radius 1. 
However, the construction does not provide any information on the topology of the zero sets on spheres of different radii. 
The function does not even need to have a singularity at $0\in\mathbb{R}^{4}$ and therefore is not necessarily one of the polynomials whose existence was shown by Akbulut and King.

However, we can explicitly construct functions like this.
Consider the function 
\begin{equation}
F(u,v)=\rho^{\deg( f_{\rho})}f_{\rho}\left(\frac{u}{\rho},\frac{v}{\rho}\right)=\rho^{s+\deg( f_{\rho})}f_{1}\left(\frac{u}{\rho^{2}},\frac{v}{\rho}\right),
\label{eq:scale}
\end{equation}
where $\rho=\sqrt{|u|^{2}+|v|^{2}}$.
If $r$ is even, all exponents of $v$ and $\overline{v}$ in $f_{1}$ are even and hence, potentially after multiplying by an appropriate power of $\rho$, $F$ is a polynomial in four real variables, say $x_{1}=\mathrm{Re}(u)$, $x_{2}=\mathrm{Im}(u)$, $x_{3}=\mathrm{Re}(v)$ and $x_{4}=\mathrm{Im}(v)$.
Consider now the zero set of $F$ on the three-sphere of radius $\rho$. 
It is exactly $\rho L$, where $L$ is the zero set of $f_{\rho}$ on the unit three-sphere. 
We know that for small values of $\rho$ this is equal to the desired link by construction. 
Hence intersecting the zero set of $F$ with a three-sphere of radius $\rho$ results in the desired link for $\rho<\epsilon$. 
Note that this is the same $\epsilon$ as above.

By construction $0$ is a singular point of $F$. Also we have seen that all points on $f_{\lambda}^{-1}(0)\cap S^{3}$ are regular for small enough $\lambda$. 
Thus for small enough $\rho$, all points on $F^{-1}(0)\cap S_{\rho}^{3}$ are regular and hence the singular point at $0$ is weakly isolated.
Thus $F$ satisfies all conditions from Akbulut and King's existence theorem.  
Similar, but more elaborate arguments justify an explicit construction of real polynomials with weakly isolated singularities for any lemniscate link.

\section{Discussion}\label{sec:discussion}

We have shown how a natural generalisation of Brauner's almost century-old construction of functions for torus knots can be adapted to make polynomial maps from $\mathbb{R}^3$ (or $S^3$) to $\mathbb{C}$ whose zeros have the form of knots and links in a family we call the lemniscate knots (generalising the torus knots).
The explicit construction parametrises a braid representation of the knot trigonometrically, which become polynomials under the mathematical procedure of closing the braid.
Lemniscate knots are fibred, and with appropriate choice of the parameters, these maps are also fibrations, and for certain lemniscate knots, the maps can be extended to weakly isolated singularities.
We have outlined some physical applications of these explicit maps, and some generalisations, in linear and nonlinear physics.

The lemniscate knots form a particularly symmetric subset of spiral knots, and we have proved and conjectured properties of both families. 
Other trends may be general; for instance, on the basis of the explicit examples we have, is seems plausible that minimum braid words of lemniscate knots are (anti)palindromic.
Preliminary investigation suggests that hyperbolic volume of families of lemniscate knots for increasing $s,r$ and $\ell$, tends to increase, suggesting that other simple knots do not occur as complex lemniscate knots.

Polynomial maps $f$ can obviously be constructed for knots and links which are are closures of more general $(k,\ell)$ Lissajous braids with coprime $k,\ell \ge 2$.
Such braids are not homogeneous so such knots are not spiral, despite some properties from Theorem \ref{spiral} applying, such as being being periodic with period $r$. 
The simplest nontrivial knot is based on the $(2,3)$ Lissajous figure, with $s=5, r=2$, represented by $(\sigma_1\sigma_3\sigma_2^{-1}\sigma_4^{-1}\sigma_1^{-1}\sigma_3^{-1}\sigma_2\sigma_4)^2$.
This apparently closes to a knot with $16$ crossings, so these knots seem likely to appear beyond standard tabulations.
Similar knots formed as closures of general Lissajous braids, without the restriction that $s$ is less than $k$ or $\ell$, have been discussed in \cite{sv:2016:lissajous}.

Thus the construction of maps $f$ extends beyond the lemniscate knots which have been our emphasis, not only to higher Lissajous figures, but also the generalisations in Section \ref{sec:fibration} which included a composite and a cable knot.
All of these generalisations apply to the (trigonometric) braid, but the same basic algebraic procedure leads to the polynomial map $f$.
In particular, the identification of the braid with the zeros of the polynomial family $p_h(u)$ from $\mathbb{C}\times S^1 \longrightarrow \mathbb{C}$ guarantees that $f$ is semiholomorphic, which is a powerful property (it is this property that allows the degree of the rational map described in Section \ref{sec:applications} to be controlled).

Having explicit functions for knotted fields is particularly useful in applications to construct knotted fields in physical systems.
The polynomial maps $f$ and $F$ which can be adapted for a variety of systems (either simply by multiplying by an overall weight function to aid normalisation, or in a more complicated way to alter the domain and target spaces).
The procedure in fact gives polynomials whose coefficients are all integers; since the knotted zeros are structurally stable, the coefficients may be smoothly perturbed (such as the holograms in \cite{dkjop:2010isolated}, or the rational map (\ref{eq:fig8hopf})) to optimise some physically-motivated merit function without destroying the nodal topology.   
From a physical point of view, the mathematical procedure is a way of finding a topologically-interesting region of (possibly high-dimensional) parameter space, for which physical principles are required to understand the significance of knotted fields.

\appendix
\section{Appendix}

\subsection{Proof of Theorem \ref{rob}}
\begin{proof}
We begin by showing the braid word of the $(s=2n+1,r=2,\ell=2)$ lemniscate knot can be written $\sigma_1^{-n} \sigma_2 \sigma_1^{-1} \sigma_2^{n}$.
This then agrees (for low values of $n$) with the minimum braid word for the knots in Table \ref{tab:ell=2} found by \cite{gittings:2004minimum}.
The basic word for $(s=2n+1,\ell=2)$ lemniscate braids with $b>0$ is, from the discussion in Section \ref{sec:braids}, given by $w^{(2n+1,2)} = \sigma_1^{-1} \sigma_2^{-1} \cdots \sigma_n^{-1} \sigma_{n+1} \cdots \sigma_{2n}$ (since anagrams of isograms are equivalent).
The knots in question thus have braid word
\begin{eqnarray*}
   [w^{(2n+1,2)}]^2 & = & \sigma_1^{-1} \sigma_2^{-1} \cdots \sigma_n^{-1} \sigma_{n+1} \cdots \sigma_{2n} \sigma_1^{-1} \sigma_2^{-1} \cdots \sigma_n^{-1} \sigma_{n+1} \cdots \sigma_{2n} \\
   & = & \sigma_1^{-1} \sigma_2^{-1} \sigma_1^{-1} \sigma_3^{-1} \sigma_2^{-1} \sigma_4^{-1} \cdots \sigma_{n-1}^{-1} \sigma_n^{-1} \sigma_{n+1} \sigma_n^{-1} \cdots \sigma_{2n} \sigma_{2n-1} \sigma_{2n}. 
\end{eqnarray*}
The rearrangement in the second line represents a rearrangement where all generators are moved as near to the beginning of the word without meeting another generator with which they do not commute.
Starting now from the left of this word, using the braid group property and Markov stabilisation, we have 
\[ \sigma_1^{-1} \sigma_2^{-1} \sigma_1^{-1} \sigma_3^{-1} \cdots = \sigma_2^{-1} \sigma_1^{-1} \sigma_2^{-1} \sigma_3^{-1} \cdots = \sigma_2^{-2} \sigma_3^{-1} \cdots.\]
This procedure is applied again, with application of the braid group relation needed three times,
\[ \sigma_2^{-2} \sigma_3^{-1} \sigma_2^{-1} \sigma_4^{-1} \cdots = \sigma_2^{-1} \sigma_3^{-1} \sigma_2^{-2} \sigma_4^{-1} \cdots = \sigma_3^{-1} \sigma_2^{-1} \sigma_3^{-1} \sigma_2^{-1} \sigma_4^{-1} \cdots 
=  \sigma_3^{-2} \sigma_2^{-1} \sigma_3^{-1} \sigma_4^{-1} \cdots = \sigma_3^{-3} \sigma_4^{-1} \cdots. \]
Repeating, we get the word
\[ [w^{(2n+1,2)}]^2 = \sigma_n^{-n} \sigma_{n+1} \sigma_n^{-1} \cdots \sigma_{2n} \sigma_{2n-1} \sigma_{2n}. \]
Following the same procedure from the right ultimately results in the word
\[ [w^{(2n+1,2)}]^2 = \sigma_n^{-n} \sigma_{n+1} \sigma_n^{-1} \sigma_{n+1}^{n}. \]
As all other strands have been eliminated, the result follows on relabelling the generators.

To prove the form of the Alexander polynomial, we use the Burau representation of the braid group \cite{kt:2008braid} on the braid word $\sigma_{1}^{n}\sigma_{2}^{-1}\sigma_{1}\sigma_{2}^{-n}$. Since the Alexander polynomial does not distinguish between mirror image (and the knots in question are ampicheiral anyway), the calculated Alexander polynomial is that of $L(s=2n+1,r=2,\ell=2)$.
The matrix representatives of $\sigma_1$ and $\sigma_2^{-1}$ are given respectively by
\[ \Sigma_1 = \left( \begin{array}{ccc} 1 - t & t & 0 \\ 1 & 0 & 0 \\ 0 & 0 & 1 \end{array}\right), \qquad 
 \Sigma_2^{-1} = \left( \begin{array}{ccc} 1 & 0 & 0 \\ 0 & 0 & 1 \\ 0 & t^{-1} & 1 - t^{-1} \end{array}\right), \]
of which the $n$th powers are (as easily verifiable by induction)
\[ \Sigma_1^n = \left( \begin{array}{ccc} A_n & t A_{n-1} & 0 \\ A_{n-1} & t A_{n-2} & 0 \\ 0 & 0 & 1 \end{array}\right), \qquad 
 \Sigma_2^{-n} = \left( \begin{array}{ccc} 1 & 0 & 0 \\ 0 & t^{-1}B_{n-2} & B_{n-1} \\ 0 & t^{-1} B_{n-1} & B_n \end{array}\right), \]
where $A_n = \sum_{j=0}^n (-t)^j$ and $B_n = \sum_{j=0}^n (-t^{-1})^j$.
This means the full braid word is represented by
\[ \Sigma_1^n \Sigma_2^{-1} \Sigma_1 \Sigma_2^{-n} = \left( \begin{array}{ccc} (1-t) A_n & A_{n-1} B_{n-1} + A_n B_{n-2} & t(A_{n} B_{n-1} + A_{n-1} B_n) \\
(1-t) A_{n-1} & A_{n-1} B_{n-2} + A_{n-2}B_{n-1} & t(A_{n-1}B_{n-1} + A_{n-2}B_n) \\
t^{-1} & t^{-1}(1-t^{-1}) B_{n-1} & (1-t^{-1}) B_n \end{array} \right). \]
The Alexander polynomial $\Delta_L(t)$ is now found by computing the determinant of a $2\times 2$ minor of this matrix minus the identity matrix, from which it can be seen (using $A_{n-1} = t^{-1}(1-A_n), B_{n-1} = t(1-B_n)$) that $\Delta_L(t) = 1 - A_n - B_n + 2 A_n B_n$.
The result follows directly from the fact that $A_n B_n = \sum_{j,k=0}^n (-t)^j (-t^{-1})^k = \sum_{m = -n}^n (n+1-|m|)(-t)^m$. \end{proof}

\subsection{Proof of Theorem \ref{conway}}
\begin{proof}
Let $L$ be a spiral knot with $r=2$. 
Then it is the closure of a braid word of the form $w^2$, where $w=\sigma_{1}^{\varepsilon_{1}}\sigma_{2}^{\varepsilon_{2}}\ldots\sigma_{s-1}^{\varepsilon_{s-1}}$.  
We write the vector $\boldsymbol{\varepsilon}=(\varepsilon_{1},\varepsilon_{2},\ldots,\varepsilon_{s-1})$ as $(\varepsilon_{1,1},\varepsilon_{1,2},\ldots,\varepsilon_{1,n_{1}},\varepsilon_{2,1},\ldots,\varepsilon_{2,n_{2}},\ldots,\varepsilon_{\ell,n_{\ell}})$ with $\varepsilon_{i,j}=\varepsilon_{i,k}$ for all $j, k\in\{1,\ldots,n_{i}\}$ and $\varepsilon_{i,n_{i}}=-\varepsilon_{i+1,1}$.

Figure \ref{fig:diagram} (b) shows a diagram of the closed braid. 
Any closing arc connecting the bottom end of one strand with the top end of another can be taken, and placed either on top or below the braid as shown in Figure \ref{fig:diagram} (c), which can be done so that it cancels all crossings on the top.
The resulting diagram can be easily brought into the form of a 4-plait as in figure \ref{fig:diagram}(d), so $L$ is rational.
Note that by placing the closing arcs such that they cancel the crossings on the top, they also cancel the crossings on the bottom if $\varepsilon_{i}=\varepsilon_{i+1}$. 
This gives a 4-plait (Figures \ref{fig:diagram} (d), (e)) with Conway notation $[\varepsilon_{1,1}(n_{1}+1),\ 2\varepsilon_{2,1},\ \varepsilon_{2,1}n_{2},\ 2\varepsilon_{3,1},\ \ldots\ \varepsilon_{\ell-1,1}n_{\ell-1},\ 2\varepsilon_{\ell,1},\ \varepsilon_{\ell,1}n_{\ell}]$. 
Note that the tangle sign convention means that the $i$th entry in the Conway notation if $(-1)^{i}$ times the number in the $ith$ box from the left in the diagram.

\begin{figure}
\begin{center}
\includegraphics[width=\textwidth]{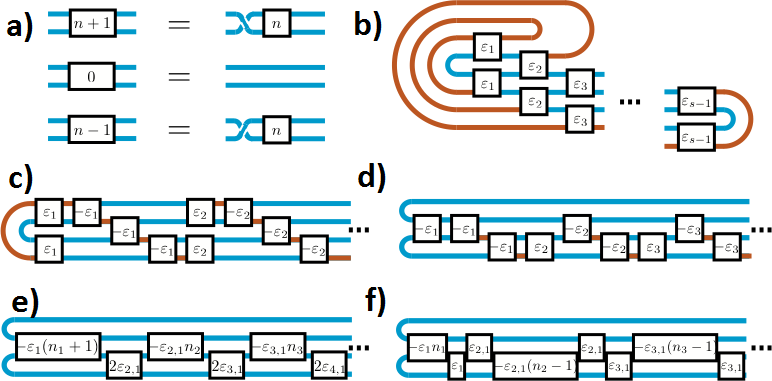}
\caption{
   Sequence of diagrams from the closed braid to a minimal diagram in 2-bridge form. 
   (a) Definition of crossing notation.
   (b) Knot diagram of the spiral braid closure. 
   Arcs which are part of the braid are blue and closing arcs are orange.
   (c) Placing orange arcs either above or below the braid diagram, gives a new diagram.
   (d) With the right choice of `above' or `below', all crossings in the top row cancel, resulting in the diagram of a closed tangle. 
   The bottom left crossing in (c) (with sign $\varepsilon_{1}$) has been moved into the middle row (now with sign $-\varepsilon_{1}$).
   (e) The knot diagram that results from canceling the crossings in the bottom row when possible.
   (f) Applying Cromwell's move allows neighbouring boxes with entries $a$ and $b$ to be transferred to boxes with entries $a-1$, $-1$ and $b-1$ or $a+1$, $1$ and $b+1$. 
   A sequence of these moves leads to the depicted diagram, which is reduced and alternating and hence minimal.}
\label{fig:diagram}
\end{center}
\end{figure}

For the following it is convenient to work with a different notation, where we define $\{b_{1},\ b_{2},\ b_{3},\ \ldots \ b_{n}\}=[-a_{1},\ a_{2},\ -a_{3},\ \ldots\ (-1)^{n}a_{n}]$ using the usual Conway notation. 
With this notation the numbers $b_{i}$ exactly correspond to the numbers in the boxes in \ref{fig:diagram}.
In his book \cite{cromwell:2004knots}, Cromwell describes a move that shows that a link with notation $\{b_{1},\ \ldots\ b_{i},\ b_{i+1}\ \ldots\}$ is equivalent to both $\{b_{1},\ \ldots b_{i}-1,\ -1,\ b_{i+1}-1\ \ldots\}$ and $\{b_{1},\ \ldots b_{i}+1,\ 1,\ b_{i+1}+1\ \ldots\}$, which shows that \[\{-\varepsilon_{1,1}(n_{1}+1),\ 2\varepsilon_{2,1},\ -\varepsilon_{2,1}n_{2},\ 2\varepsilon_{3,1},\ \ldots\ -\varepsilon_{\ell-1,1}n_{\ell-1},\ 2\varepsilon_{\ell,1},\ -\varepsilon_{\ell,1}n_{\ell}\}\]
 is equivalent to 
\begin{align*}&\{-\varepsilon_{1,1}n_{1},\ \varepsilon_{1,1},\ \varepsilon_{2,1},\ -\varepsilon_{2,1}(n_{2}-1),\ \varepsilon_{2,1},\ \varepsilon_{3,1},\ -\varepsilon_{3,1}(n_{3}-1),\ \varepsilon_{3,1},\ \varepsilon_{4,1},\ldots\\
 &\ \ldots\ -\varepsilon_{\ell-1,1}(n_{\ell-1}-1),\  \varepsilon_{\ell-1,1},\ \varepsilon_{\ell,1},\ -\varepsilon_{\ell,1}n_{\ell}\}.\end{align*}
Going back to the usual tangle notation, from the definition of $\boldsymbol{\varepsilon}$ in (\ref{eq:epscondition}), this is equal to the desired Conway notation and the corresponding diagram has crossing number $s+\ell-1$.
It is easy to check that the diagram is reduced and alternating and thus by the proof of one of Tait's conjectures by Kauffman \cite{kauffman:1987state}, Thistlewaite \cite{thistlethwaite:1987spanning,thistlethwaite:1988kauffman} and Murasugi \cite{murasugi:1987i,murasugi:1987ii} the diagram is minimal. 
Hence we have $c(L)=s+\ell-1$.
Since $L$ is alternating and fibred by Theorem \ref{spiral}\ref{num:fibred}, it follows from \cite{murasugi:1991braid} that $b_{\mathrm{ind}}(L)=s+\ell-1-(s-1)+1=\ell+1$ if $L$ is a knot.
We believe the statements could alternatively have been proven using Theorem A and Theorem B from \cite{murasugi:1991braid} as well. \end{proof}

\begin{acknowledgements}
The authors are grateful for discussions over several years with many colleagues on this subject, particularly Gareth Alexander, Mark Bell, Michael Berry, David Chillingworth, William Irvine, Randy Kamien, Lou Kauffman, Kevin O'Holleran, Miles Padgett, Daniel Peralta-Salas, Jonathan Robbins, Radmila Sazdanovic, Paul Sutcliffe and Sandy Taylor.  
Preliminary calculations on higher Lissajous braids were made by Jonathan Holden on a Nuffield Research Placement, and on hyperbolic volumes of lemniscate knots by Sandy Taylor.
MRD is grateful to the KITP for hospitality during some of this work.
BB and DF are supported by the Leverhulme Programme Grant `Scientific Properties of Complex Knots'. 
RPK was supported by the Leverhulme Research Grant `Knots of Light in Nature'.   
MRD acknowledges support from a Royal Society University Research Fellowship during part of this work.
\end{acknowledgements}

%\disclaimer{Insert disclaimer text here.}

%%%%%%%%%% Insert bibliography here %%%%%%%%%%%%%%

\end{document}